\newtheorem{theorem}{Theorem}
\newtheorem{lemma}{Lemma}
\theoremstyle{definition}
\newtheorem{remark}{Remark}
\newcommand{\bR}{\mathbb{R}}
\newcommand{\ol}{\overline}
\newcommand{\ul}{\underline}
\newcommand{\Int}{{\textstyle \int}}
\newcommand{\diam}{\operatorname{diam}}
\numberwithin{equation}{section}
\begin{document}

\title[On the differentiation of integrals with respect to  translation invariant convex density   bases]
    {On the differentiation of integrals with respect to  translation invariant convex density  bases}

\author{Giorgi Oniani}

\address{Department of Mathematics, Akaki Tsereteli State University, 59, Tamar Mepe St., Kutaisi 4600, Georgia; e-mail: {\rm oniani@atsu.edu.ge}}

\maketitle

\begin{abstract} For a translation invariant convex density basis $B$ it is shown that its Busemann-Feller extension $B_{\mathrm{BF}}$ has close to $B$ properties, in particular, $B_{\mathrm{BF}}$ differentiates the same class of non-negative functions as $B$. Using the similarity between properties of the bases $B$ and $B_{\mathrm{BF}}$ some results known for Busemann-Feller  bases are transferred to bases without restriction  of being Busemann-Feller.

\medskip
\noindent \textbf{Key words:} Differentiation of integrals, translation  invariant basis,    Busemann-Feller basis, maximal operator.

\smallskip
\noindent \textbf{2010 Mathematics Subject Classification}: 28A15
\end{abstract}

\medskip
\medskip

\textbf{1. Definitions and notation}. A mapping $B$ defined on $\mathbb{R}^n$ is called a \emph{differentiation basis} (briefly: \emph{basis}) if for each $x\in \mathbb{R}^n$ the value  $B(x)$ is a family of bounded measurable sets of positive measure which contain $x$ and  such that there exists a sequence $(R_k)$ of sets from $B(x)$ with $\lim_{k\rightarrow\infty}\diam R_k = 0$.

Let $B$ be a basis. For $f\in L(\mathbb{R}^n)$  and $x\in \mathbb{R}^n$, the upper and lower limits of the integral means $\frac{1}{|R|}\int_R f$, where $R$ is an arbitrary set  from $B(x)$ and $\diam R \rightarrow0$, are called the \emph{upper and the lower derivatives with respect to $B$ of the integral of $f$ at the point $x$}, and are denoted by $\overline{D}_B(\int f,x)$ and $\underline{D}_B(\int f,x)$, respectively. If the upper and the lower derivatives coincide, then their combined value is called the \emph{derivative of $\int f$ at the point $x$} and denoted by $D_B(\int f,x)$. We say that \emph{$B$ differentiates }$\int f$ (or $\int f$ \emph{is differentiable with respect to} $B$) if $\ol{D}_B(\int f,x)=\ul{D}{\,}_B(\int f,x)=f(x)$ for almost all $x\in \bR^n$. If this is true for each $f$ in the class of functions $F\subset L(\mathbb{R}^n)$ we say that $B$ \emph{differentiates} $F$.

A basis $B$ is called:
\begin{enumerate}
\item[$\bullet$] \emph{homothecy invariant} if for every $x\in\mathbb{R}^n$, $R\in B(x)$  and a homothecy $H: \mathbb{R}^n \rightarrow \mathbb{R}^n$  we have that $H(R)\in B(H(x))$;

\item[$\bullet$] \emph{translation invariant} if for every $x\in\mathbb{R}^n$, $R\in B(x)$  and a translation $T: \mathbb{R}^n \rightarrow \mathbb{R}^n$  we have that $T(R)\in B(T(x))$;

\item[$\bullet$]  \emph{formed by sets from a class} $\Delta$ if  for every $x\in \mathbb{R}^n$ and $R\in B(x)$ we have that $R\in \Delta$;

\item[$\bullet$]  \emph{convex} if  it is formed by the class of convex sets;

\item[$\bullet$]  \emph{Busemann-Feller} if $(x\in\mathbb{R}^n, R\in B(x), y\in R)\Rightarrow R\in B(y)$;

\item[$\bullet$]  \emph{sub-basis of a basis} $B'$ (notation: $B\subset B' $)  if  $B(x)\subset B'(x)$ for every $x\in\mathbb{R}^n$;

\item[$\bullet$]  \emph{density} basis if  $B$ differentiates $\Int \chi_E$ for every bounded measurable set $E\subset \mathbb{R}^n$.
\end{enumerate}

Note that each homothecy invariant basis is translation invariant also.

In what follows the dimension of the space $\mathbb{R}^n$ is assumed to be greater
than $1$.

Denote by $\textbf{I}=\textbf{I}(\mathbb{R}^n)$ the basis for which $\textbf{I}(x)$ consists of all $n$-dimensional intervals containing the point $x$. Differentiation with respect to $\textbf{I}$ is called the \emph{strong differentiation}. Note that $\textbf{I}$ is a density basis, moreover, by virtue of the well-known result of Jessen, Marcinkiewicz and Zygmund (see, e.g. [1, p.50]) $\textbf{I}$ differentiates the class $L(1+\ln^{+}L)^{n-1}(\mathbb{R}^n)$.

The \emph{maximal operator $M_B$ and  truncated maximal operator $M_B^{r}$ $(r\in (0,\infty])$ corresponding to a basis} $B$ are defined as follows
\begin{align*}
M_B(f)(x)& =\sup_{R\in B(x)}\frac{1}{|R|}\int_R |f|,
\\
M_B^r(f)(x)& =\sup_{R\in B(x), \diam R<r}\frac{1}{|R|}\int_R |f|,
\end{align*}
where $f \in L_{\text{loc}}(\mathbb{R}^n)$ and $x\in\mathbb{R}^n$. Obviously, $M_B=M_B^\infty$.

Let us say that a basis $B$ is a \emph{measurable} if  for  any $f \in L(\mathbb{R}^n)$ the functions $\overline{D}_B\left(\Int f, \cdot\right)$, $\underline{D}_B\left(\Int f, \cdot\right)$  and $M_B^r(f)$ $(r\in (0,\infty])$ are measurable. It is easy to check that if $B$ is a translation invariant or Busemann-Feller basis then $B$ is measurable.

For measurable bases  $B$ and $B'$ let us introduce the following definitions:
\begin{enumerate}
\item[$\bullet$] We say that \emph{ $B'$ locally majorizes } $B$ (written as $B \leq B'$) if there exist $c\geq 1$ and $\delta>0$, such that the estimate
$$
|\{M_{B}^r(f)\geq\lambda\}|\leq c |\{M_{B'}^{cr}(f)\geq \lambda/c\}|\eqno(1)
$$
is fulfilled for every $f\in L(\mathbb{R}^n), r\in (0,\delta)$ and $\lambda\in (0,\infty)$;

\item[$\bullet$] We say that \emph{ $B'$  majorizes } $B$ (written as $B \preceq B'$) if there exists $c\geq 1$  such that the estimate $(1)$ is fulfilled for every $f\in L(\mathbb{R}^n), r\in (0,\infty]$ and $\lambda\in (0,\infty)$.
\end{enumerate}

It is easy to see that $B_1\leq B_2 \leq B_3 \Rightarrow B_1 \leq B_3$ and $B_1\preceq B_2 \preceq B_3 \Rightarrow B_1 \preceq B_3$.

For a basis $B$ by $\overline{B}$ we will denote the family $\bigcup_{x\in \mathbb{R}^n} B(x)$. Following [2] let us call $\overline{B}$ the \emph{spread} of  $B$.

For a basis $B$ by $B_{\text{BF}}$ denote the basis defined as follows
$$
B_{\text{BF}}(x)=\{R\in \overline{B}: R \ni x\}\;\;\;\;\;\;(x\in\mathbb{R}^n).
$$
Let us call $B_{\text{BF}}$ the \emph{Busemann-Feller extension of a basis} $B$. It is easy to check that: 1) $B\subset B_{\text{BF}}$; 2) $B_{\text{BF}}$ is the smallest  Busemann-Feller basis containing $B$; 3) $B_{\text{BF}}$ is the largest basis having the same as $B$ spread;  4) If $B$ is homothecy invariant (translation invariant, convex) then $B_{\text{BF}}$ also is homothecy invariant (translation invariant, convex).

Note that the integral of a function $f\in L(\mathbb{R}^n)$ at a point $x\in \mathbb{R}^n$ may have the following seven different type limits of indeterminacy with respect to a basis $B$: i) $-\infty<\underline{D}_B(\int f,x)=\overline{D}_B(\int f,x)<\infty$; ii) $-\infty=\underline{D}_B(\int f,x)=\overline{D}_B(\int f,x)$; iii) $\underline{D}_B(\int f,x)=\overline{D}_B(\int f,x)=\infty$; iv) $-\infty=\underline{D}_B(\int f,x)<\overline{D}_B(\int f,x)=\infty$; v) $-\infty=\underline{D}_B(\int f,x)<\overline{D}_B(\int f,x)<\infty$; vi) $-\infty<\underline{D}_B(\int f,x)<\overline{D}_B(\int f,x)=\infty$; vii) $-\infty<\underline{D}_B(\int f,x)<\overline{D}_B(\int f,x)<\infty$.

\begin{remark}\label{r2}

If $B$ is a translation invariant convex basis then  the upper and lower derivatives with respect to $B$ for every function $f\in L(\mathbb{R}^n)$ possess the following "separation" property:
$$
 \underline{D}_B(\Int f,x)\leq f(x)\leq\overline{D}_B(\Int f,x)\;\;\text{for almost every}\;\;x\in\mathbb{R}^n.
$$
Indeed, there exists a translation invariant basis $B'\subset B$ for which $B'(0)=\{R_k\}$, where $R_1\supset R_2\supset\dots$ and $\diam R_k\rightarrow 0$. For the basis $B'$ it is valid Vitali type covering theorem (see [1, p.25]). It implies that $B'$ differentiates $L(\mathbb{R}^n)$. Consequently, for every $f\in L(\mathbb{R}^n)$, the estimations $ \underline{D}_B(\Int f,x)\leq\underline{D}_{B'}(\Int f,x)=f(x)=\overline{D}_{B'}(\Int f,x)\leq\overline{D}_B(\Int f,x)$ hold almost everywhere.
\end{remark}

\begin{remark}\label{r2}
In [3] (see Lemma 2) it was proved that if  $B$ is a density basis  then for every non-negative function $f\in L(\mathbb{R}^n)$ at almost every point $x\in\mathbb{R}^n$ it is valid equality $\underline{D}_{B}(\Int f,x)=f(x)$.
\end{remark}

\begin{remark}\label{r2}
From Remarks 1 and 2 it follows that if $B$ is a translation invariant convex density basis then  the integral of an arbitrary non-negative function $f\in L(\mathbb{R}^n)$ at almost every  point $x\in \mathbb{R}^n$ may have only the following three types of limits of indeterminacy with respect to the basis $B$: a) $\underline{D}_B(\int f,x)=f(x)=\overline{D}_B(\int f,x)<\infty$; b) $\underline{D}_B(\int f,x)=f(x)<\overline{D}_B(\int f,x)<\infty$; c) $\underline{D}_B(\int f,x)=f(x)<\overline{D}_B(\int f,x)=\infty$.

\end{remark}

\medskip

\textbf{2. Results}. Let we are given two bases $B$ and $B'$ with one and the same spread, i.e. $\overline{B}=\overline{B'}$. It seems natural the question weather differential properties of $B_1$ and $B_2$ are similar. In this regard it is true the following theorem showing that for the case of  translation invariant convex density bases the differential properties  are quite close.

\begin{theorem}
If $B$ is a  translation invariant convex density basis then its Busemann-Feller extension  $B_{\mathrm{BF}}$ differentiates the same class of non-negative functions as $B$, moreover, the integral of an arbitrary non-negative function $f\in L(\mathbb{R}^n)$ at almost every point $x\in \mathbb{R}^n$ has one and the same type limits of indeterminacy with respect to the bases $B$ and $B_{\mathrm{BF}}$.
\end{theorem}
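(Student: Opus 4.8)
The plan is to reduce the whole statement to the single almost–everywhere inequality
$$
\overline{D}_{B_{\mathrm{BF}}}\bigl(\Int f,x\bigr)\le\overline{D}_{B}\bigl(\Int f,x\bigr)\qquad\text{for a.e. }x\in\bR^n, \eqno(\star)
$$
required for every non-negative $f\in L(\bR^n)$, and then to prove $(\star)$.

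\emph{Why $(\star)$ suffices.} Since $B\subset B_{\mathrm{BF}}$ one always has $\overline{D}_B\le\overline{D}_{B_{\mathrm{BF}}}$ and $\underline{D}_{B_{\mathrm{BF}}}\le\underline{D}_{B}$ pointwise, so $(\star)$ is equivalent to $\overline{D}_{B_{\mathrm{BF}}}(\Int f,\cdot)=\overline{D}_B(\Int f,\cdot)$ a.e.; also $B_{\mathrm{BF}}$ is translation invariant and convex, so Remark~1 applies to it. Granting $(\star)$: for a bounded measurable $E$ and a ball $B(0,N)\supset E$, apply $(\star)$ to the non-negative functions $\chi_E$ and $\chi_{B(0,N)\setminus E}$ and use Remark~2 for $B$, Remark~1 for $B_{\mathrm{BF}}$, and the locality of $\overline{D}_{B_{\mathrm{BF}}}(\Int(\cdot),x)$ (a limit over sets of vanishing diameter) to obtain $\overline{D}_{B_{\mathrm{BF}}}(\Int\chi_E,\cdot)=\underline{D}_{B_{\mathrm{BF}}}(\Int\chi_E,\cdot)=\chi_E$ a.e.; hence $B_{\mathrm{BF}}$ is again a translation invariant convex density basis, and Remark~2 gives $\underline{D}_{B_{\mathrm{BF}}}(\Int f,\cdot)=f$ a.e.\ for every non-negative $f$, exactly as for $B$. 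Consequently, for such $f$: ``$B$ differentiates $\Int f$'' $\Leftrightarrow$ ``$\overline{D}_B(\Int f,\cdot)=f$ a.e.'' $\Leftrightarrow$ (by $(\star)$ and Remark~1) ``$\overline{D}_{B_{\mathrm{BF}}}(\Int f,\cdot)=f$ a.e.'' $\Leftrightarrow$ ``$B_{\mathrm{BF}}$ differentiates $\Int f$'', and the coincidence of the types (a), (b), (c) of Remark~3 at a.e.\ point is immediate from $\overline{D}_B(\Int f,\cdot)=\overline{D}_{B_{\mathrm{BF}}}(\Int f,\cdot)$ and $\underline{D}_B(\Int f,\cdot)=\underline{D}_{B_{\mathrm{BF}}}(\Int f,\cdot)=f$ a.e.

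\emph{Proof of $(\star)$.} By translation invariance $\overline{B}$ is the family of all translates of the sets of $B(0)$, so each $R\in B_{\mathrm{BF}}(x)$ can be written $R=S+y$ with $S\in B(0)$, $y\in R$, $|x-y|<\diam R$ and — the crucial point — $R\in B(y)$ (an ``anchor'' of $R$). For non-negative $f$ one has $\overline{D}_B(\Int f,\cdot)=\lim_{r\to0^+}M_B^r(f)$, $\overline{D}_{B_{\mathrm{BF}}}(\Int f,\cdot)=\lim_{r\to0^+}M_{B_{\mathrm{BF}}}^r(f)$, and $M_B^r(f)$ is lower semicontinuous, being a supremum of the continuous functions $x\mapsto\frac1{|S|}\int_{x+S}f$ ($S\in B(0)$, $\diam S<r$). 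Fix rationals $0\le\mu<\mu'<\lambda$ and assume, aiming at a contradiction, that $Z:=\{\overline{D}_B(\Int f,\cdot)<\mu\}\cap\{\overline{D}_{B_{\mathrm{BF}}}(\Int f,\cdot)>\lambda\}$ has positive measure (a countable union over $\mu<\lambda$ then gives $(\star)$). A point $x\in Z$ satisfies $M_B^{1/j}(f)(x)\le\mu'$ for some $j\in\bN$ and carries sets $R_k\in B_{\mathrm{BF}}(x)$ with $\diam R_k\to0$ and $\frac1{|R_k|}\int_{R_k}f>\lambda$; writing $R_k\in B(y_k)$ for an anchor $y_k$ we get $y_k\to x$ and, for large $k$, $M_B^{1/j}(f)(y_k)>\lambda$ — so the anchors lie in the open set $\{M_B^{1/j}(f)>\lambda\}$ while $x$ lies in the closed set $\{M_B^{1/j}(f)\le\mu'\}$. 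Fixing $j$ on a positive-measure piece $Z'\subset Z$ and choosing a density-$1$ point $x_0$ of $Z'$, one would, on a small ball $Q\ni x_0$ on which $Z'$ has density close to $1$, select an almost disjoint subfamily of $\{R_x:x\in Z'\cap Q\}$ covering a fixed proportion of $Z'\cap Q$ (Vitali-type selection), and then exploit the convexity of these sets, their anchoring in $\{M_B^{1/j}(f)>\lambda\}$, and the density-basis property of $B$ (through the Vitali covering theorem cited in Remark~1 and the $B$-differentiability of $\Int\chi_E$) to bound the $f$-mass they carry against what $M_B^{1/j}(f)(x_0)\le\mu'$ allows, reaching $\lambda\lesssim\mu'$, a contradiction.

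\emph{The main obstacle} is exactly the step absorbing the anchor offset $x-y$: because $B$ is only translation invariant (not homothecy invariant) one cannot rescale a set of $B(y)$ onto one of $B(x)$, and a single convex $R\ni x$ may meet the corresponding set $x+S\in B(x)$ in arbitrarily small relative measure (thin slabs). The comparison must therefore be carried out in an averaged, covering-theoretic way, using convexity to control the shapes and the density-basis hypothesis to control, for a.e.\ $x_0$, how much of each competing set is ``seen correctly from $x_0$''. Making this covering estimate precise — and verifying the measurability and locality points used in the reduction — is where the real work lies; the rest is formal.
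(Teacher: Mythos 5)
There is a genuine gap, and it sits exactly where you placed it: the claim $(\star)$ is never proved, and in fact your own sketch cannot prove it. The contradiction you aim for is ``$\lambda\lesssim\mu'$'', i.e. $\lambda\le C\mu'$ with a constant $C$ depending on $n$ coming from the covering selection and the shape comparison; since you only assumed $\mu'<\lambda$, this is no contradiction unless $C=1$, which no Vitali-type selection of arbitrary convex sets will give. At best the argument would yield a comparison with loss of a constant, $\overline{D}_{B_{\mathrm{BF}}}(\Int f,\cdot)\le C\,\overline{D}_{B}(\Int f,\cdot)$ a.e., which is strictly weaker than $(\star)$; and your whole reduction (density property of $B_{\mathrm{BF}}$, equivalence of differentiation, coincidence of types) is built on the exact a.e.\ equality, so it collapses once only a constant-loss estimate is available. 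Indeed $(\star)$ is stronger than the theorem itself (on the set of type (b) the two upper derivatives need only be simultaneously finite and $>f$, not equal), and nothing in the paper, nor in your sketch, supports exact equality. Moreover the deferred ``covering estimate'' is precisely the hard core of the problem: for a merely translation invariant basis formed by arbitrary convex sets there is no Besicovitch/Vitali covering theorem (even the interval basis $\mathbf{I}$ fails Vitali), and the anchor offset cannot be absorbed by rescaling, as you yourself note.

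To see what is actually needed: the paper never compares values pointwise. It proves a distributional comparison with a constant (local majorization, inequality $(1)$), obtained not by a covering argument for the convex sets themselves but by passing, via John's theorem, to auxiliary translation invariant rectangle bases $B_\ast\leq B\leq B_{\mathrm{BF}}\leq B^\ast_{\mathrm{BF}}$, proving two Tauberian-type set inclusions for truncated maximal operators (Lemmas 8 and 9, a reflection/translation trick), and invoking the Hagelstein--Parissis characterization of translation invariant density bases (Theorem A) to conclude both that $B_{\mathrm{BF}}$ is a density basis and that all these bases locally majorize each other. Then, to convert a constant-loss comparison into ``same differentiability class and same type of indeterminacy'', one must compare the sets $\{\overline{D}\geq f+\lambda\}$ rather than $\{\overline{D}\geq\lambda\}$; this requires the truncation step $f=f_{[k]}+f^{[k]}$ together with the fact that a density basis differentiates bounded summable functions (the paper's Lemmas 3--5). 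Your proposal contains neither the mechanism that produces the weak-type comparison nor the truncation machinery that makes a comparison with constants suffice, so the missing step is not a technicality but the substance of the proof.
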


Note that Theorem 1 gives some general extension principle  of results proved for Busemann-Feller bases to bases without restriction of being Busemann-Feller. Some applications of Theorem 1 will be given in the last section of the paper.
\begin{remark}\label{r2}
Theorem 1 is not true for an arbitrary density basis. Namely, by Hagelstein and Parissis in [4, see Proof of Theorem 3] it is constructed a translation invariant density basis $B$ in $\mathbb{R}$ the Busemann-Feller extension of which is not a density basis.
\end{remark}

\begin{remark}\label{r2}
Let $B$ be a basis in $\mathbb{R}^2$ for which $B(x)$ $(x\in\mathbb{R}^2)$ consists of all two-dimensional intervals of the type $[x_1,x_1+t_1]\times [x_2,x_2+t_2]$. Obviously, $B$ is homothecy invariant convex density basis and $B_{\mathrm{BF}}=\textbf{I}$. By Zerekidze [5] it was constructed a function $f\in L(\mathbb{R}^2)$ the integral of which is differentiable with respect to $B$ but is not differentiable with respect to $\textbf{I}$. Thus, the requirement of non-negativeness of functions in Theorem 1 is essential.
\end{remark}

Theorem 1 we obtain from the following two results.

\begin{theorem}
If $B$ and $B'$ are measurable density bases locally majorizing each other then the integral of an arbitrary non-negative function $f\in L(\mathbb{R}^n)$ at almost every point $x\in \mathbb{R}^n$ has one and the same type limits of indeterminacy with respect to the bases $B$ and $B'$.
\end{theorem}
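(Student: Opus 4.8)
The plan is to show that the three types a), b), c) of Remark 3 are preserved when passing from $B$ to $B'$ and back; by symmetry of the hypotheses it suffices to show that if $\ol{D}_B(\Int f, x) = \infty$ on a set of positive measure then the same holds for $B'$, and that if $\ol{D}_B(\Int f, x) < \infty$ a.e.\ then $\ol{D}_{B'}(\Int f, x) < \infty$ a.e. Indeed, since both $B$ and $B'$ are density bases and $f \geq 0$, Remarks 2 and 3 already give $\ul{D}_B(\Int f, x) = f(x) = \ul{D}_{B'}(\Int f, x)$ a.e.; hence the type of the limit of indeterminacy at $x$ is determined solely by whether $\ol{D}(\Int f, x)$ equals $f(x)$, is finite and larger, or is infinite. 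So the whole theorem reduces to comparing the upper derivatives, and more precisely to controlling the two sets
$$
E_\infty(B) = \{x : \ol{D}_B(\Int f, x) = \infty\}, \qquad E_>(B) = \{x : f(x) < \ol{D}_B(\Int f, x) < \infty\}
$$
and their analogues for $B'$, up to null sets.

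The key mechanism is the local majorization inequality $(1)$ together with a standard truncation argument. First I would observe that the upper derivative is governed by the truncated maximal functions: for any $g \in L(\bR^n)$ one has $\ol{D}_B(\Int g, x) \leq M_B^r(g)(x)$ for every $r>0$, and conversely a Fatou-type / localization argument shows that if $M_B^r(g)(x)$ stays bounded as $r \to 0$ then $\ol{D}_B(\Int g, x)$ is finite. To exploit $(1)$ I would split $f = f\chi_{\{f \le N\}} + f\chi_{\{f > N\}} =: f_N + g_N$. The bounded part $f_N$ is handled directly: since $B$ and $B'$ are density bases, $\ol{D}_B(\Int f_N, x) = \ol{D}_{B'}(\Int f_N, x) = f_N(x)$ a.e., so $f_N$ contributes nothing to the discrepancy between the two bases. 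The tail $g_N$ has small $L^1$ norm, $\|g_N\|_1 \to 0$ as $N \to \infty$. Now apply $(1)$ to $g_N$: for $r < \delta$,
$$
|\{M_B^r(g_N) \geq \lambda\}| \leq c\,|\{M_{B'}^{cr}(g_N) \geq \lambda/c\}|,
$$
and symmetrically with $B$ and $B'$ interchanged. This gives a two-sided weak-type comparison of the maximal operators of $B$ and $B'$ on the tails, with constants independent of $N$ and $r$.

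From this I would extract the two claims. For the $E_\infty$ comparison: $x \in E_\infty(B)$ forces, for every $N$, that $\limsup_{r\to 0} M_B^r(g_N)(x) = \infty$ (because $\ol{D}_B(\Int f_N,x)=f_N(x)<\infty$ a.e., so the infinite part of the upper derivative must come from $g_N$); feeding this through $(1)$ shows $x$ lies, up to a null set, in the corresponding set for $B'$, and vice versa — so $E_\infty(B) = E_\infty(B')$ modulo null sets. For the $E_>$ comparison one argues similarly but quantitatively: on $E_>(B)$ the quantity $\ol{D}_B(\Int f,x) - f(x)$ is a positive finite function, and its super-level sets $\{x : \ol{D}_B(\Int f,x) \geq f(x) + k^{-1}\} \cap E_>(B)$ can be estimated, via $(1)$ applied to $g_N$ with $N$ large enough that the $f_N$-part contributes less than $\tfrac{1}{2k^{-1}}$ to the oscillation, by the corresponding quantity for $B'$; letting $k \to \infty$ then shows the $B$-oscillation is positive a.e.\ on a set iff the $B'$-oscillation is. Combining, the partition of $\{f < \infty\}$ into the three types a), b), c) is the same for $B$ and $B'$ up to a null set.

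The main obstacle I anticipate is the passage from the weak-type maximal inequality $(1)$ to a genuine pointwise statement about $\limsup_{r\to 0}$ of the means — i.e.\ transferring information about the distribution functions of $M_B^r$ into the almost-everywhere behaviour of $\ol{D}_B(\Int f,\cdot)$. The honest way to do this is the classical Banach-type argument: assume for contradiction that the symmetric difference of $E_\infty(B)$ and $E_\infty(B')$ (or the relevant $E_>$ sets) has positive measure, and derive a contradiction with $(1)$ by choosing $N$ large (so the tail is small in $L^1$) and $\lambda$, $r$ appropriately; here one must be careful that the constant $c$ in $(1)$ is absorbed correctly and that the truncation radius $cr$ stays below the relevant threshold. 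A secondary technical point is that $(1)$ is only assumed for $r$ in a neighbourhood of $0$, which is exactly why only the $\limsup$ as $r\to 0$ — i.e.\ the derivative, not the full maximal function — can be compared; but since the limits of indeterminacy are by definition a $\diam R \to 0$ notion, this is precisely what is needed.
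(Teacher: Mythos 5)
Your overall architecture coincides with the paper's: reduce to comparing upper derivatives (for a non-negative $f$ and a density basis the lower derivative equals $f$ a.e., Remark 2), split $f$ into a bounded part and a tail (the paper's $f_{[k]}$, $f^{[k]}$), dispose of the bounded part by the density property, and compare the sets $\{\overline{D}(\int f,\cdot)=\infty\}$ and $\{\overline{D}(\int f,\cdot)\geq f+1/m\}$ for the two bases by letting $r\to 0$ in the majorization inequality $(1)$. So the skeleton is right, and your treatment of the tails is essentially the paper's Lemmas 3--5.

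The genuine gap is the step you yourself call the main obstacle: upgrading $(1)$ from a comparison of measures to almost-everywhere inclusions between the corresponding sets for $B$ and $B'$. The mechanism you sketch for it cannot work. Taking $N$ large so that $\|f\chi_{\{f>N\}}\|_{1}$ is small buys nothing, because no weak $(1,1)$ estimate for $M_{B}$ or $M_{B'}$ is assumed or available (a density basis only controls characteristic functions, with $\lambda$-dependent constants); and if such an estimate did hold, both bases would differentiate $L(\mathbb{R}^n)$ and all the sets in question would be null, so the only interesting case is precisely the one where smallness of the tail in $L^{1}$ gives no information. Moreover, the limiting form of $(1)$ only yields the global inequality $|\{\overline{D}_{B}(\int g,\cdot)\geq\lambda\}|\leq c\,|\{\overline{D}_{B'}(\int g,\cdot)\geq\lambda/c\}|$, which is perfectly consistent with the first set being essentially disjoint from the second; no choice of $\lambda$, $r$, $N$ alone produces a contradiction from positive measure of the difference. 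The missing idea is the paper's localization: since the upper derivative depends only on the local behaviour of the function, apply the limiting inequality to $g\chi_{Q}$ for cubes $Q$ (bounded support is also what legitimizes the passage $r\to0$, since it makes the level sets of $M^{r}$ of finite measure), obtaining $|\{\overline{D}_{B}(\int g,\cdot)\geq\lambda\}\cap Q|\leq c\,|\{\overline{D}_{B'}(\int g,\cdot)\geq\lambda/c\}\cap Q|$ for every cube $Q$; then, if the set $\{\overline{D}_{B}(\int g,\cdot)\geq\lambda\}\setminus\{\overline{D}_{B'}(\int g,\cdot)\geq\lambda/c\}$ had positive measure, a Lebesgue density point of it, which is a rarefaction point of the second set, together with a sufficiently small cube centred there, contradicts the local inequality (this is exactly the proof of the paper's Lemmas 1 and 2). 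Without this localization-plus-density-point device, or an equivalent substitute, your outline does not close.
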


\begin{theorem}
If $B$ is a  translation invariant convex density basis then its Busemann-Feller extension  $B_{\mathrm{BF}}$ is also density bases  and  $B$ and $B_{\mathrm{BF}}$ locally majorize each other.

\end{theorem}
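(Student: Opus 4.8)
The plan is to prove the two assertions of Theorem 3 separately, treating the density claim for $B_{\mathrm{BF}}$ first and then the mutual local majorization. For the first part, note that $B\subset B_{\mathrm{BF}}$ by property 1), so $M_B^r(f)\leq M_{B_{\mathrm{BF}}}^r(f)$ pointwise; the nontrivial direction is to bound $M_{B_{\mathrm{BF}}}^r$ in terms of a dilated maximal operator for $B$. Here is where translation invariance and convexity do the work. If $R\in\overline{B}$ contains $x$, then $R\in B(y)$ for some $y$, and by translation invariance $R-y+z\in B(z)$ for every $z$; in particular $R-y+x\in B(x)$. Since $R$ is convex and contains both $x$ and is a translate of a member of $B(x)$, a covering/geometry argument in the spirit of the Busemann--Feller reduction (cf. [1, Ch.~II]) shows that the average of $|f|$ over $R$ is controlled by the average of $|f|$ over a member of $B(x)$ of comparable diameter, up to a dimensional constant $c=c(n)$ coming from the bounded overlap of translates of a convex body by a bounded set. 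This yields $M_{B_{\mathrm{BF}}}^r(f)(x)\leq c\,M_B^{cr}(|f|)(x+\text{(bounded shift)})$, and after integrating the weak-type estimate one gets the inequality $(1)$ in both directions with the same $c$ and any $\delta$; taking $\delta=\infty$ would fail in general, which is exactly why the statement is about \emph{local} majorization.

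More carefully, for the geometric step I would fix $R\in\overline{B}$ with $x\in R$ and $\diam R<r$, pick $y$ with $R\in B(y)$, and use the convexity of $R$ together with $x,y\in R$ to slide $R$ along the segment $[x,y]$: the translate $R':=R+(x-y)$ lies in $B(x)$, has the same measure and diameter as $R$, and $R\cup R'$ is contained in a convex set of diameter at most $2r$ whose measure is at most $c(n)|R|$ (a convex body and a translate of it by a vector of length $\le\diam R$ have union of measure $\le 2^n$ times the measure, say). Hence $\frac{1}{|R|}\int_R|f|\leq \frac{c(n)}{|R'|}\int_{R\cup R'}|f|\leq c(n)\cdot 2\,M_B^{2r}(f)(x)$ once we note $\frac{1}{|R'|}\int_{R'}|f|\le M_B^{2r}(f)(x)$ and, for the part of the integral over $R\setminus R'$, that $R\setminus R'$ can be covered by boundedly many translates of small members of $B(x)$ obtained by the same sliding trick applied to sub-bodies. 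Taking the supremum over such $R$ gives $M_{B_{\mathrm{BF}}}^r(f)(x)\leq C\,M_B^{2r}(f)(x)$ with $C=C(n)$, which immediately implies both inequalities $(1)$ with $\delta=\infty$ in fact — so one even gets $B\preceq B_{\mathrm{BF}}$ and $B_{\mathrm{BF}}\preceq B$ — in particular the (weaker) local majorization claimed, and since $B$ is a density basis the pointwise domination $M_{B_{\mathrm{BF}}}^r\le C M_B^{2r}$ forces $M_{B_{\mathrm{BF}}}$ to satisfy the density-basis characterization via the Busemann--Feller/Hayes--Pauc criterion [1, Thm.~II.something], giving that $B_{\mathrm{BF}}$ is a density basis.

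For the density-basis conclusion one can argue directly: by a theorem of Busemann--Feller--de Possel type [1, p.~43--47], a Busemann--Feller basis $B_{\mathrm{BF}}$ is a density basis if and only if $M_{B_{\mathrm{BF}}}$ satisfies a "localized weak-type" halting condition, and that condition transfers verbatim from $M_B$ through the pointwise estimate $M_{B_{\mathrm{BF}}}^r(f)\le C\,M_B^{2r}(f)$ established above together with the fact that $B$, being a density basis, satisfies it; alternatively one invokes Theorem 2 of the present paper applied to $E$ and $\chi_E$ is unnecessary since density is what we are proving, so the direct criterion is cleaner. The main obstacle I anticipate is making the sliding/covering estimate fully rigorous: one must verify that $R\setminus R'$ (and more generally the "error" region not covered by a single translate in $B(x)$) really is covered by a controlled number of members of $B(x)$ of diameter $\le 2r$, which requires exploiting that $B$ is translation invariant \emph{and} that its members at the point $x$ are not too degenerate — and here Remark~1's observation, that there is a translation invariant sub-basis $B'\subset B$ with nested sets $R_1\supset R_2\supset\cdots$ shrinking to $x$, may be needed to guarantee enough small sets in $B(x)$ to perform the covering. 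Once the pointwise comparison $M_{B_{\mathrm{BF}}}^r\asymp M_B^{\,\cdot\, r}$ is in hand, both halves of Theorem~3 follow, and Theorem~1 is then immediate by combining Theorem~2 with Theorem~3.
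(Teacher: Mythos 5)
There is a genuine gap, and it is at the heart of your argument: the claimed pointwise domination $M_{B_{\mathrm{BF}}}^r(f)(x)\leq C\,M_B^{cr}(f)(x)$ is false for translation invariant convex density bases. Take the basis of Remark 5: in $\mathbb{R}^2$ let $B(x)$ consist of the intervals $[x_1,x_1+t_1]\times[x_2,x_2+t_2]$, so that $B_{\mathrm{BF}}=\textbf{I}$. If $f=\chi_Q$ with $Q$ a tiny square lying strictly to the lower left of $x$ and very close to $x$, then every $R\in B(x)$ misses $\supp f$, so $M_B^{cr}(f)(x)=0$, while $M_{\textbf{I}}^{r}(f)(x)$ is close to $1$. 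This is exactly where your sliding step breaks down: after translating $R$ to $R'=R+(x-y)\in B(x)$, the mass of $f$ may sit entirely in $R\setminus R'$, and that region cannot be covered by members of $B(x)$ --- every member of $B(x)$ must contain $x$, and at $x$ the basis may simply have no sets reaching into the direction where $f$ lives. Covering $R\setminus R'$ by translates of members of $B$ anchored at \emph{other} points does not help, because a pointwise bound at $x$ only allows sets from $B(x)$. For the same reason your subsequent claims (that one even gets $\delta=\infty$, i.e.\ $B_{\mathrm{BF}}\preceq B$, and that the density property ``transfers verbatim'' through the pointwise estimate) have no support; note also that your text is internally inconsistent on this point, first asserting that $\delta=\infty$ must fail and then asserting it holds.

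The paper avoids any pointwise comparison and works instead with measures of level sets. Using F.~John's theorem it sandwiches each convex set between rectangles ($R_\ast(E)\subset E\subset n^2R_\ast(E)$), producing translation invariant rectangle bases with $B_\ast\leq B\leq B_{\mathrm{BF}}\leq B_{\mathrm{BF}}^\ast$ via genuinely pointwise (and harmless) estimates. The substantive work is in Lemmas 8 and 9, which are Tauberian-type covering statements: the level set $\{M_{B_{\mathrm{BF}}^\ast}^r(f)\geq\lambda\}$ is shown to be contained in a level set of a (dilated, reflected) maximal operator applied to the characteristic function of a level set of $M_{B_\ast}$ --- the point $x$ where $M_{B_{\mathrm{BF}}^\ast}$ is large need not itself be a point where $M_{B_\ast}$ is large, but it lies near a set $P$ of comparable measure consisting of such points. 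Combining these inclusions with Theorem A of Hagelstein--Parissis (the Tauberian characterization of translation invariant density bases) and Lemma 6 yields both that $B_{\mathrm{BF}}$ is a density basis and that all four bases locally majorize each other. The restriction to \emph{local} majorization is forced precisely because Theorem A supplies the Tauberian constant only for some small $r(B,\lambda)$; full majorization is recovered only under homothecy invariance (Remark 8), which is another sign that a proof yielding $\delta=\infty$ from translation invariance alone, as yours purports to do, cannot be correct.
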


\textbf{Acknowledgement}. The author was supported by Shota Rustaveli National Science Foundation (project no.  217282).

\medskip
\medskip

\textbf{3. Proof of Theorem 2.}
\begin{lemma}\label{l8}
Let $B$ and $B'$ be measurable bases and  $B \leq B'$. Then for an arbitrary non-negative function $f\in L(\mathbb{R}^n)$ and a  number $\lambda\in (0,\infty)$,
$$
\big|\big\{\overline{D}_{B}\left(\Int f, \cdot\right)\geq \lambda \big\}\setminus \big\{\overline{D}_{B'}\left(\Int f, \cdot\right)\geq \lambda/c \big\}\big|=0,
$$
where $c$ is the constant from $(1)$.
\end{lemma}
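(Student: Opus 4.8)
The plan is to re-express the upper derivatives as limits of truncated maximal functions and then argue by contradiction, localizing $f$ so tightly around the putative exceptional set that the constant $c$ in $(1)$ does no harm.

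I would start from the elementary identity that for a non-negative $g\in L(\bR^n)$ one has
\[
\overline{D}_{B}\big(\Int g,x\big)=\inf_{r>0}M_B^{r}(g)(x)=\lim_{r\downarrow 0}M_B^{r}(g)(x),
\]
valid because $\int_R g=\int_R|g|$ and $r\mapsto M_B^{r}(g)(x)$ is non-decreasing (and similarly for $B'$). From it I would isolate two facts: (i) on $A:=\{\overline{D}_{B}(\Int f,\cdot)\geq\lambda\}$ one has $M_B^{r}(f)\geq\lambda$ for \emph{every} $r>0$; (ii) if $g\in L(\bR^n)$ is non-negative with bounded support, then all the sets $\{M_{B'}^{r}(g)\geq\mu\}$ lie in one fixed bounded set (hence have finite measure) and decrease, as $r\downarrow 0$, to $\{\overline{D}_{B'}(\Int g,\cdot)\geq\mu\}$, so their measures converge to $|\{\overline{D}_{B'}(\Int g,\cdot)\geq\mu\}|$. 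Writing $A':=\{\overline{D}_{B'}(\Int f,\cdot)<\lambda/c\}$ (which, together with $A$, is measurable since $B$ and $B'$ are measurable bases), the lemma asserts exactly that $|A\cap A'|=0$.

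Assume to the contrary $|A\cap A'|>0$, choose a cube $Q$ with $|A\cap A'\cap Q|>0$, a compact set $K\subset A\cap A'\cap Q$ with $|K|>\tfrac12|A\cap A'\cap Q|>0$, and put $\ve:=|K|/(2c)$. By outer regularity pick a bounded open $U\supset K$ with $|U\setminus K|<\ve$, then a bounded open $V$ with $K\subset V\subset\overline V\subset U$, and set $g:=f\chi_V\in L(\bR^n)$, $g\geq 0$, $\supp g\subset\overline V$. Since $g$ equals $f$ on a neighbourhood of $K$, we get $M_B^{\rho}(g)=M_B^{\rho}(f)$ on $K$ whenever $\rho<\dist(K,\bR^n\setminus V)$, and $\overline{D}_{B'}(\Int g,\cdot)=\overline{D}_{B'}(\Int f,\cdot)$ on $K$; and since $g$ vanishes off $\overline V$, $\overline{D}_{B'}(\Int g,\cdot)=0$ off $\overline V$. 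Combining with $K\subset A$ (fact (i)) and $K\subset A'$, for all small $\rho$ we obtain $K\subset\{M_B^{\rho}(g)\geq\lambda\}$, while $\{\overline{D}_{B'}(\Int g,\cdot)\geq\lambda/c\}\subset\overline V\setminus K\subset U\setminus K$, so $|\{\overline{D}_{B'}(\Int g,\cdot)\geq\lambda/c\}|<\ve$. Now apply $(1)$ to $g$ with truncation radius $\rho\in(0,\delta)$ to get $|\{M_B^{\rho}(g)\geq\lambda\}|\leq c\,|\{M_{B'}^{c\rho}(g)\geq\lambda/c\}|$; the left side is $\geq|K|$ for $\rho$ small, and letting $\rho\downarrow 0$ and using fact (ii) the right side tends to $c\,|\{\overline{D}_{B'}(\Int g,\cdot)\geq\lambda/c\}|<c\ve=|K|/2$. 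Thus $|K|\leq|K|/2$, a contradiction, so $|A\cap A'|=0$, which is the claim.

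I expect the only real obstacle to be seeing why a crude approach fails and what fixes it: applying $(1)$ directly to $f$ yields only $|A|\leq c\,|\{M_{B'}^{cr}(f)\geq\lambda/c\}|$, which is useless because the factor $c$ on the right cannot be absorbed. The decisive point is the sandwich $K\subset V\subset\overline V\subset U$ together with the passage from $f$ to $f\chi_V$: this forces the \emph{entire} right-hand side of $(1)$ (which then lives in $\overline V\setminus K$, disjoint from $K$) to have measure $<\ve$, with $\ve$ at our disposal. The remaining care is routine --- chiefly the interchange of $\rho\downarrow 0$ with Lebesgue measure, which works because $\supp g$ is bounded, so the relevant truncated maximal sets all sit inside one fixed bounded set.
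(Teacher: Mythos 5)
Your proof is correct, and it reaches the conclusion by a noticeably different endgame than the paper. Both arguments rest on the same two pillars: letting the truncation radius tend to $0$ in the hypothesis $(1)$ (legitimate because for a non-negative $g\in L(\mathbb{R}^n)$ with bounded support the sets $\{M_{B'}^{cr}(g)\geq\lambda/c\}$ have finite measure for small $r$ and decrease to $\{\overline{D}_{B'}(\Int g,\cdot)\geq\lambda/c\}$ --- your fact (ii), which indeed only needs to be invoked for small $r$), and the purely local character of the upper derivative. The paper, however, first converts the bounded-support estimate into the cube-localized inequality $\big|\{\overline{D}_{B}(\Int g,\cdot)\geq\lambda\}\cap Q\big|\leq c\,\big|\{\overline{D}_{B'}(\Int g,\cdot)\geq\lambda/c\}\cap Q\big|$, valid for every non-negative $g\in L(\mathbb{R}^n)$ and every cube $Q$, and then contradicts it by choosing $Q$ as a small cube centred at a Lebesgue density point of the exceptional set $\{\overline{D}_{B}(\Int f,\cdot)\geq\lambda\}\setminus\{\overline{D}_{B'}(\Int f,\cdot)\geq\lambda/c\}$, which is automatically a rarefaction point of $\{\overline{D}_{B'}(\Int f,\cdot)\geq\lambda/c\}$; there the constant $c$ is defeated because the relative measures tend to $1$ and $0$. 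You instead defeat $c$ by inner and outer regularity: a compact $K$ inside the exceptional set, an open $U\supset K$ with $|U\setminus K|<|K|/(2c)$, and the sandwich $K\subset V\subset\overline V\subset U$ (which correctly guards against a possibly non-null $\partial U$), after which $(1)$ is applied just once, to the single localized function $f\chi_V$. So you trade the Lebesgue density theorem for regularity of Lebesgue measure and skip the paper's intermediate inequality (3); the two routes are of comparable length, yours being slightly more self-contained, while the paper's yields a reusable localized estimate along the way.
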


\begin{proof}
Passing to the limit in the estimation $(1)$ as $r \rightarrow 0$
we see for an arbitrary non-negative function $g \in L(\mathbb{R}^n)$ with bounded support that
$$
\big|\big\{\overline{D}_{B}\left(\Int g, \cdot\right)\geq \lambda \big\}\big|\leq c\big|\big\{\overline{D}_{B'}\left(\Int g, \cdot\right)\geq \lambda/c \big\}\big|.\eqno(2)
$$

Since the value of the upper derivative depends only on the local behaviour of the
function, we can conclude from $(2)$ that for an arbitrary non-negative function
$g \in L(\mathbb{R}^n)$  and an arbitrary cube $Q$
$$
\big|\big\{\overline{D}_{B}\left(\Int g, \cdot\right)\geq \lambda \big\}\cap Q\big|=
\big|\big\{\overline{D}_{B}\left(\Int g \chi_Q, \cdot\right)\geq \lambda \big\}\big|\leq
$$
$$
\leq c\big|\big\{\overline{D}_{B'}\left(\Int g \chi_Q, \cdot\right)\geq \lambda/c\big\}\big|=c\big|\big\{\overline{D}_{B'}\left(\Int g, \cdot\right)\geq \lambda/c\big\}\cap Q\big|. \eqno(3)
$$

Let us now assume that the converse to the assertion of the lemma holds, that is,
$$
\big|\big\{\overline{D}_{B}\left(\Int f, \cdot\right)\geq \lambda \big\}\setminus \big\{\overline{D}_{B'}\left(\Int f, \cdot\right)\geq \lambda/c \big\}\big|>0.
$$
Then there exists a point $x$ that is a density point of the set $\big\{\overline{D}_{B}\left(\Int f, \cdot\right)\geq \lambda \big\}$ and a point of rarefaction of the set $\big\{\overline{D}_{B'}\left(\Int f, \cdot\right)\geq \lambda/c \big\}$. Hence, considering a sufficiently small
cube $Q$ with centre at $x$, we obtain an inequality contradicting $(3)$. This proves
the lemma.
\end{proof}

\begin{lemma}\label{l8}
Let $B$ and $B'$ be measurable bases and  $B \leq B'$. Then for an arbitrary non-negative function $f\in L(\mathbb{R}^n)$,
$$
\big|\big\{\overline{D}_{B}\left(\Int f, \cdot\right)=\infty \big\}\setminus \big\{\overline{D}_{B'}\left(\Int f, \cdot\right)=\infty \big\}\big|=0.
$$
\end{lemma}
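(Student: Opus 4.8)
The plan is to deduce this statement from the previous lemma by exhausting the set $\{\overline{D}_B(\Int f,\cdot)=\infty\}$ with the superlevel sets at finite thresholds. Observe that
$$
\big\{\overline{D}_{B}\left(\Int f, \cdot\right)=\infty \big\}=\bigcap_{k=1}^\infty\big\{\overline{D}_{B}\left(\Int f, \cdot\right)\geq k \big\},
$$
and likewise for $B'$. Applying Lemma 2 (the previous lemma) with $\lambda=k$ for each $k\in\mathbb{N}$, we get that
$$
\big|\big\{\overline{D}_{B}\left(\Int f, \cdot\right)\geq k \big\}\setminus \big\{\overline{D}_{B'}\left(\Int f, \cdot\right)\geq k/c \big\}\big|=0
$$
for every $k$, where $c\geq 1$ is the constant from $(1)$.

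Next I would intersect appropriately. Fix a point $x$ in $\{\overline{D}_{B}(\Int f,\cdot)=\infty\}$ outside the union over $k$ of the exceptional null sets above; this union is still null. For such $x$ and every $k$ we have $x\in\{\overline{D}_{B}(\Int f,\cdot)\geq k\}$, hence $x\in\{\overline{D}_{B'}(\Int f,\cdot)\geq k/c\}$, i.e. $\overline{D}_{B'}(\Int f,x)\geq k/c$. Since this holds for all $k$ and $c$ is a fixed finite constant, letting $k\to\infty$ gives $\overline{D}_{B'}(\Int f,x)=\infty$. Therefore
$$
\big\{\overline{D}_{B}\left(\Int f, \cdot\right)=\infty \big\}\setminus \big\{\overline{D}_{B'}\left(\Int f, \cdot\right)=\infty \big\}\subset \bigcup_{k=1}^\infty\Big(\big\{\overline{D}_{B}\left(\Int f, \cdot\right)\geq k \big\}\setminus \big\{\overline{D}_{B'}\left(\Int f, \cdot\right)\geq k/c \big\}\Big),
$$
and the right-hand side is a countable union of null sets, hence null, which proves the lemma.

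There is essentially no obstacle here: the only point requiring a moment's care is that the threshold on the $B'$ side is $k/c$ rather than $k$, but since $c$ is independent of $k$ this rescaling is harmless as $k\to\infty$, and the countable subadditivity of Lebesgue measure does the rest. Measurability of the sets involved is guaranteed by the hypothesis that $B$ and $B'$ are measurable bases, so all the sets written above are genuinely measurable and the measure-zero statements make sense.
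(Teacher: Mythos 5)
Your proof is correct. The only point to verify is the set-theoretic inclusion $\{\overline{D}_{B}(\int f,\cdot)=\infty\}\setminus\{\overline{D}_{B'}(\int f,\cdot)=\infty\}\subset\bigcup_{k}\bigl(\{\overline{D}_{B}(\int f,\cdot)\geq k\}\setminus\{\overline{D}_{B'}(\int f,\cdot)\geq k/c\}\bigr)$, which holds because a point where $\overline{D}_{B'}(\int f,\cdot)<\infty$ must fail the threshold $k/c$ for some $k$; combined with Lemma 1 at $\lambda=k$ and countable subadditivity, this settles the lemma, and the fact that $c$ is independent of $k$ is exactly what makes the thresholds $k/c$ still tend to infinity.

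Your route, however, is genuinely different from the paper's. The paper does not invoke Lemma 1 as a black box: it passes to the limit in the weak-type estimate $(1)$, first as $r\to 0$ and then as $\lambda\to\infty$, to obtain the quantitative inequality $|\{\overline{D}_{B}(\int g,\cdot)=\infty\}|\leq c\,|\{\overline{D}_{B'}(\int g,\cdot)=\infty\}|$ for non-negative $g$ with bounded support, and then repeats the localization-plus-density-point argument of Lemma 1 (restricting to cubes $Q$, choosing a density point of the left-hand set that is a rarefaction point of the right-hand set) to remove the bounded-support restriction. Your argument deduces the statement purely from the conclusion of Lemma 1 by exhausting $\{\overline{D}_{B}(\int f,\cdot)=\infty\}$ with the finite-level sets, so it is shorter, avoids redoing the density-point argument, and would apply to any pair of bases for which Lemma 1's conclusion is known, regardless of how it was obtained. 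What the paper's version buys in exchange is the intermediate measure inequality between the two infinity sets (with the explicit constant $c$) for compactly supported functions, which is slightly more information than the null-set statement, though it is not needed for Theorem 2.
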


\begin{proof}
Passing to the limit in the estimation $(1)$, first as  $r \rightarrow 0$, and then as $\lambda\rightarrow \infty$ we see for an arbitrary nonnegative function $g \in L(\mathbb{R}^n)$ with bounded support that
$$
\big|\big\{\overline{D}_{B}\left(\Int g, \cdot\right)=\infty \big\}\big|\leq c\big|\big\{\overline{D}_{B'}\left(\Int g, \cdot\right)=\infty \big\}\big|.
$$
The rest part of the proof is analogous to the one given in Lemma 1.
\end{proof}

For a function $f\in L(\mathbb{R}^n)$ and $a>0$ denote $f_{[a]}=f\chi_{\{|f|\leq a\}}$ and $f^{[a]}=f\chi_{\{|f|>a\}}$.

\begin{lemma}\label{l8}
Let $B$  be a density  basis. Then for an arbitrary non-negative function $f\in L(\mathbb{R}^n)$ and a  number $\lambda\in (0,\infty)$,
$$
\bigg|\big\{\overline{D}_{B}\left(\Int f, \cdot\right)\geq f+ \lambda \big\}\bigtriangleup \bigcap_{k=1}^\infty\big\{\overline{D}_{B}\left(\Int f^{[k]}, \cdot\right)\geq \lambda \big\}\bigg|=0.
$$
\end{lemma}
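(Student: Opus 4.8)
The plan is to decompose $f=f_{[k]}+f^{[k]}$ for each $k\in\bN$ and then let $k\to\infty$ pointwise, using that $f$ is finite almost everywhere. First I would invoke the classical fact that a density basis differentiates every bounded measurable function: localizing to a cube, approximating uniformly by simple functions, and using $\bigl|\frac{1}{|R|}\int_R(g-h)\bigr|\le\|g-h\|_\infty$ to pass differentiation through uniform limits, one reduces to the defining property of a density basis (cf. [1]). In particular, for each $k$ one has $\ol{D}_B(\Int f_{[k]},x)=\ul{D}_B(\Int f_{[k]},x)=f_{[k]}(x)$ for almost every $x$.

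Next I would use the elementary additivity of the upper derivative: at any point $x$ at which $B$ differentiates $\Int g$,
$$\ol{D}_B(\Int(g+h),x)=g(x)+\ol{D}_B(\Int h,x),$$
which is immediate since $\sup_R(a_R+b_R)$ is squeezed between $\inf_R a_R+\sup_R b_R$ and $\sup_R a_R+\sup_R b_R$ as $\diam R\to0$, while $\sup_R a_R$ and $\inf_R a_R$ both tend to $g(x)$. Applying this with $g=f_{[k]}$ and $h=f^{[k]}$, and recalling $f-f_{[k]}=f^{[k]}\ge 0$, I obtain that outside a null set $N_k$,
$$\ol{D}_B(\Int f,x)\ge f(x)+\lambda\iff\ol{D}_B(\Int f^{[k]},x)\ge f^{[k]}(x)+\lambda.$$
Put $N=\{f=\infty\}\cup\bigcup_{k=1}^\infty N_k$, a set of measure zero. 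Then for $x\notin N$ I would check both inclusions: if $\ol{D}_B(\Int f,x)\ge f(x)+\lambda$, the equivalence applied for every $k$ gives $\ol{D}_B(\Int f^{[k]},x)\ge f^{[k]}(x)+\lambda\ge\lambda$, so $x\in\bigcap_k\{\ol{D}_B(\Int f^{[k]},\cdot)\ge\lambda\}$; conversely, if $x$ lies in that intersection, choosing $k>f(x)$ makes $f^{[k]}(x)=0$ and $f_{[k]}(x)=f(x)$, whence $\ol{D}_B(\Int f,x)=f(x)+\ol{D}_B(\Int f^{[k]},x)\ge f(x)+\lambda$. Thus the two sets in the statement agree modulo $N$.

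The only step beyond routine bookkeeping is the reduction "density basis $\Rightarrow$ differentiates bounded measurable functions"; once that is in hand, the rest is just additivity of the upper limit together with a countable union of null sets, so I do not anticipate a genuine obstacle.
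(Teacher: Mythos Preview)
Your proof is correct and follows essentially the same line as the paper's own argument: both reduce to the fact that a density basis differentiates bounded summable functions, then exploit the decomposition $f=f_{[k]}+f^{[k]}$ and the additivity $\ol D_B(\Int f,x)=f_{[k]}(x)+\ol D_B(\Int f^{[k]},x)$ at points of differentiability, and finally use $f<\infty$ a.e.\ to pick $k>f(x)$. The only cosmetic difference is that the paper phrases the passage to the intersection via $\varliminf_{k}\{\ol D_B(\Int f^{[k]},\cdot)\ge f^{[k]}+\lambda\}$, whereas you check the two inclusions directly; the content is the same.
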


\begin{proof}
It is known that each density basis differentiate the integral of every bounded summable function (see, e.g., [1, p.72]). Using this assertion for every $k\in\mathbb{N}$ we have that $B$ differentiates $\int f_{[k]}$. Consequently, it is easy to see that for every $k\in\mathbb{N}$,
$$
\big|\big\{\overline{D}_{B}\left(\Int f, \cdot\right)\geq f+ \lambda \big\}\bigtriangleup \big\{\overline{D}_{B}\left(\Int f^{[k]}, \cdot\right)\geq f^{[k]}+\lambda \big\}\big|=0.\eqno(4)
$$

For almost every point $x$ we have that $f^{[k]}(x)=0$ if $k$ is big enough. Hence it is easy to see that
$$
\bigg|\Big(\mathop{\underline{\lim}}\limits_{k\rightarrow\infty} \big\{\overline{D}_{B}\left(\Int f^{[k]}, \cdot\right)\geq f^{[k]}+\lambda \big\}\Big)\bigtriangleup \bigcap_{k=1}^\infty \big\{\overline{D}_{B}\left(\Int f^{[k]}, \cdot\right)\geq \lambda \big\}\bigg|=0.\eqno(5)
$$

From $(4)$ and $(5)$ we obtain the needed relation. The lemma is proved.
\end{proof}

\begin{lemma}\label{l8}
Let $B$ and $B'$ be measurable density bases and  $B \leq B'$. Then for an arbitrary non-negative function $f\in L(\mathbb{R}^n)$ and a number $\lambda\in (0,\infty)$,
$$
\big|\big\{\overline{D}_{B}\left(\Int f, \cdot\right)\geq f+ \lambda \big\}\setminus \big\{\overline{D}_{B'}\left(\Int f, \cdot\right)\geq f+ \lambda/c \big\}\big|=0,
$$
where $c$ is the constant from $(1)$.
\end{lemma}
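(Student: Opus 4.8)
The plan is to deduce the statement from Lemma 1 and Lemma 3 by passing through the truncations $f^{[k]}$.

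First I would apply Lemma 3 twice. Applied to the basis $B$ and the number $\lambda$ it gives, up to a set of measure zero,
$$\big\{\overline{D}_{B}\left(\Int f, \cdot\right)\geq f+\lambda\big\}=\bigcap_{k=1}^\infty\big\{\overline{D}_{B}\left(\Int f^{[k]}, \cdot\right)\geq\lambda\big\},$$
and, since $B'$ is a density basis by hypothesis, applied to $B'$ and the number $\lambda/c$ it gives, again modulo a null set,
$$\big\{\overline{D}_{B'}\left(\Int f, \cdot\right)\geq f+\lambda/c\big\}=\bigcap_{k=1}^\infty\big\{\overline{D}_{B'}\left(\Int f^{[k]}, \cdot\right)\geq\lambda/c\big\}.$$

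Next, for each fixed $k\in\mathbb{N}$ the function $f^{[k]}=f\chi_{\{|f|>k\}}$ is non-negative and belongs to $L(\mathbb{R}^n)$, so Lemma 1 applies to $f^{[k]}$ with the number $\lambda$; using that the constant in $(1)$ for the pair $B\leq B'$ is $c$, this yields $\big|\{\overline{D}_{B}(\Int f^{[k]},\cdot)\geq\lambda\}\setminus\{\overline{D}_{B'}(\Int f^{[k]},\cdot)\geq\lambda/c\}\big|=0$ for every $k$. Since $\left(\bigcap_k A_k\right)\setminus\left(\bigcap_k C_k\right)\subset\bigcup_k(A_k\setminus C_k)$ and a countable union of null sets is null, it follows that $\bigcap_{k=1}^\infty\{\overline{D}_{B}(\Int f^{[k]},\cdot)\geq\lambda\}$ is, up to a null set, contained in $\bigcap_{k=1}^\infty\{\overline{D}_{B'}(\Int f^{[k]},\cdot)\geq\lambda/c\}$.

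Finally I would combine the three relations above: the left-hand intersection coincides almost everywhere with $\{\overline{D}_{B}(\Int f,\cdot)\geq f+\lambda\}$ and the right-hand one with $\{\overline{D}_{B'}(\Int f,\cdot)\geq f+\lambda/c\}$, so the set difference appearing in the statement is null, which is what is required. I do not anticipate any genuine difficulty; the only points to keep track of are that the same constant $c$ is used throughout (legitimate, since it is the constant of $(1)$ for $B\leq B'$, which also governs the conclusion of Lemma 1) and that Lemma 3 may be invoked for $B'$ precisely because $B'$ is assumed to be a density basis.
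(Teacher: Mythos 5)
Your proposal is correct and follows essentially the same route as the paper: the paper's own proof applies Lemma 3 to both $B$ and $B'$ to rewrite the two sets as countable intersections over the truncations $f^{[k]}$, then uses Lemma 1 for each $f^{[k]}$ and combines the resulting null sets exactly as you do. The only difference is that you spell out the set-theoretic step $\left(\bigcap_k A_k\right)\setminus\left(\bigcap_k C_k\right)\subset\bigcup_k(A_k\setminus C_k)$, which the paper leaves implicit.
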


\begin{proof}
By Lemma 3 we have
$$
\bigg|\big\{\overline{D}_{B}\left(\Int f, \cdot\right)\geq f+ \lambda \big\}\bigtriangleup \bigcap_{k=1}^\infty\big\{\overline{D}_{B}\left(\Int f^{[k]}, \cdot\right)\geq \lambda \big\}\bigg|=0,\eqno(6)
$$
$$
\bigg|\big\{\overline{D}_{B'}\left(\Int f, \cdot\right)\geq f+ \lambda/c \big\}\bigtriangleup \bigcap_{k=1}^\infty\big\{\overline{D}_{B'}\left(\Int f^{[k]}, \cdot\right)\geq \lambda/c \big\}\bigg|=0.\eqno(7)
$$
On the other hand by virtue of  Lemma 1 for every $k\in \mathbb{N}$ we have that
$$
\big|\big\{\overline{D}_{B}\left(\Int f^{[k]}, \cdot\right)\geq \lambda \big\}\setminus \big\{\overline{D}_{B'}\left(\Int f^{[k]}, \cdot\right)\geq \lambda/c \big\}\big|=0.\eqno(8)
$$

From $(6)-(8)$ we easily obtain the needed relation. The lemma is proved.

\end{proof}

\begin{lemma}\label{l8}
Let $B$ and $B'$ be measurable density bases and  $B \leq B'$. Then for an arbitrary non-negative function $f\in L(\mathbb{R}^n)$,
$$
\big|\big\{\overline{D}_{B}\left(\Int f, \cdot\right)> f \big\}\setminus \big\{\overline{D}_{B'}\left(\Int f, \cdot\right)> f \big\}\big|=0.
$$
\end{lemma}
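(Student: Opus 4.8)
The plan is to reduce the statement to Lemma 4 by means of a countable decomposition. First I would observe that, since $f\in L(\mathbb{R}^n)$, we have $f(x)<\infty$ for almost every $x$, and for such an $x$ the inequality $\overline{D}_B(\Int f,x)>f(x)$ holds if and only if there is a rational $\lambda>0$ with $\overline{D}_B(\Int f,x)\geq f(x)+\lambda$ (for the non-trivial direction: if $\overline{D}_B(\Int f,x)<\infty$ pick a rational $\lambda$ with $0<\lambda<\overline{D}_B(\Int f,x)-f(x)$, and if $\overline{D}_B(\Int f,x)=\infty$ any $\lambda$ works). Hence, up to a set of measure zero,
$$
\big\{\overline{D}_B(\Int f,\cdot)>f\big\}=\bigcup_{\lambda\in\mathbb{Q},\,\lambda>0}\big\{\overline{D}_B(\Int f,\cdot)\geq f+\lambda\big\}.
$$

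Next, for each such $\lambda$ I would invoke Lemma 4, which gives $\big|\big\{\overline{D}_B(\Int f,\cdot)\geq f+\lambda\big\}\setminus\big\{\overline{D}_{B'}(\Int f,\cdot)\geq f+\lambda/c\big\}\big|=0$, where $c\geq 1$ is the constant from $(1)$. Since $\lambda/c>0$, the inclusion $\big\{\overline{D}_{B'}(\Int f,\cdot)\geq f+\lambda/c\big\}\subset\big\{\overline{D}_{B'}(\Int f,\cdot)>f\big\}$ holds after discarding the null set $\{f=\infty\}$. Combining these two facts, $\big|\big\{\overline{D}_B(\Int f,\cdot)\geq f+\lambda\big\}\setminus\big\{\overline{D}_{B'}(\Int f,\cdot)>f\big\}\big|=0$ for every rational $\lambda>0$.

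Finally I would take the union over the countably many rational $\lambda>0$ and use the displayed decomposition to conclude $\big|\big\{\overline{D}_B(\Int f,\cdot)>f\big\}\setminus\big\{\overline{D}_{B'}(\Int f,\cdot)>f\big\}\big|=0$, as required. I do not expect a genuine obstacle: the only points needing care are that all the level sets involved are measurable (which is exactly why $B$ and $B'$ are assumed to be measurable bases, so Lemma 4 applies) and that the reduction from "$\overline{D}_B(\Int f,\cdot)\geq f+\lambda$ for some rational $\lambda>0$" to "$\overline{D}_B(\Int f,\cdot)>f$" is legitimate once the null set where $f=+\infty$ is removed; both are routine.
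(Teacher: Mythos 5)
Your proposal is correct and follows essentially the same route as the paper: the paper decomposes $\big\{\overline{D}_{B}\left(\Int f, \cdot\right)> f \big\}$ as $\bigcup_{m}\big\{\overline{D}_{B}\left(\Int f, \cdot\right) \geq f +1/m \big\}$ (and similarly for $B'$ with $1/(cm)$) and applies Lemma 4 with $\lambda=1/m$, which is the same countable-decomposition argument you give with rational $\lambda$ in place of $1/m$. The measure-zero caveat about $\{f=\infty\}$ that you note is harmless and implicit in the paper's argument.
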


\begin{proof}
We have that $\big\{\overline{D}_{B}\left(\Int f, \cdot\right)> f \big\}=\bigcup_{m=1}^\infty\big\{\overline{D}_{B}\left(\Int f, \cdot\right) \geq f +1/m \big\},$ and $\big\{\overline{D}_{B'}\left(\Int f, \cdot\right)> f \big\}=\bigcup_{m=1}^\infty\big\{\overline{D}_{B'}\left(\Int f, \cdot\right) \geq f +1/(cm) \big\}.$ Here $c$ is the constant from $(1)$. Now using Lemma 4 for  function $f$ and for every $\lambda=1/m$ $(m\in\mathbb{N})$ we obtain the needed relation. The lemma is proved.
\end{proof}

Taking into account Remark 3   from Lemmas 2 and 5 we obtain Theorem 2.

\medskip
\medskip

\textbf{4. Proof of Theorem 3.}  For a basis $B$ and a non-degenerate linear mapping $M:\mathbb{R}^n\rightarrow \mathbb{R}^n$ by $B_M$ denote the basis for which
$$
B_M(M(x))=\{M(R): R\in B(x)\}\;\;\;\;\;\;\;(x\in \mathbb{R}^n).
$$

\begin{lemma}\label{l8}
If  $B$ is   a density basis then for every non-degenerate linear mapping $M:\mathbb{R}^n\rightarrow \mathbb{R}^n$ the basis $B_M$ also possesses the density property.

\end{lemma}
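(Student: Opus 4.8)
The plan is to exploit the fact that a non-degenerate linear map $M$ is a bi-Lipschitz homeomorphism of $\bR^n$ which multiplies Lebesgue measure by the constant factor $|\det M|$. Because of this, integral means over a set and over its $M$-image coincide once one performs an accompanying change of variables in the integrand, and the shrinking condition $\diam R\to 0$ is preserved. (That $B_M$ is again a basis is immediate: for $x\in\bR^n$ each $M(R)$ with $R\in B(x)$ is bounded, measurable, of positive measure $|\det M|\,|R|$, contains $M(x)$, and a subsequence $R_k$ with $\diam R_k\to0$ yields $\diam M(R_k)\to0$.)

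First I would record the elementary change-of-variables identity. Let $x\in\bR^n$, $R\in B(x)$, $y=M(x)$ and $S=M(R)\in B_M(y)$. For any bounded measurable set $F\sbs\bR^n$, using $|M(A)|=|\det M|\,|A|$ and $M(R)\cap F=M\bigl(R\cap M^{-1}(F)\bigr)$,
$$
\frac{1}{|S|}\Int_S\chi_F=\frac{|M(R)\cap F|}{|\det M|\,|R|}=\frac{|R\cap M^{-1}(F)|}{|R|}=\frac{1}{|R|}\Int_R\chi_{M^{-1}(F)}.
$$
Since $M$ is bi-Lipschitz, $\diam S\to0$ exactly when $\diam R\to0$, and $S$ ranges over $B_M(y)$ exactly when $R$ ranges over $B(x)$; hence taking upper and lower limits in the displayed identity gives, for every $x\in\bR^n$ with $y=M(x)$,
$$
\ol{D}_{B_M}\!\left(\Int\chi_F,\,y\right)=\ol{D}_{B}\!\left(\Int\chi_{M^{-1}(F)},\,x\right),\qquad
\ul{D}_{B_M}\!\left(\Int\chi_F,\,y\right)=\ul{D}_{B}\!\left(\Int\chi_{M^{-1}(F)},\,x\right).
$$

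Next, fix a bounded measurable set $F\sbs\bR^n$ and put $E=M^{-1}(F)$, which is again bounded and measurable since $M^{-1}$ is a non-degenerate linear map. As $B$ is a density basis, $D_{B}\!\left(\Int\chi_E,x\right)=\chi_E(x)$ for almost every $x\in\bR^n$; combining this with the equalities above and with $\chi_E(x)=\chi_{M^{-1}(F)}(x)=\chi_F(M(x))$ yields $D_{B_M}\!\left(\Int\chi_F,\,M(x)\right)=\chi_F(M(x))$ for almost every $x\in\bR^n$. Finally, because a non-degenerate linear map and its inverse both carry Lebesgue-null sets to Lebesgue-null sets, the exceptional set in the $x$-variable has $M$-image of measure zero, so $D_{B_M}\!\left(\Int\chi_F,\,y\right)=\chi_F(y)$ for almost every $y\in\bR^n$. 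Since $F$ was arbitrary, $B_M$ is a density basis.

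The only points that need to be stated carefully are the measure-scaling and set-theoretic identities $|M(A)|=|\det M|\,|A|$ and $M(R)\cap F=M(R\cap M^{-1}(F))$, the equivalence $\diam R\to0\iff\diam M(R)\to0$ coming from bi-Lipschitzness, and the preservation of null sets under $M$ and $M^{-1}$ that licenses passing from "almost every $x$" to "almost every $y=M(x)$". Beyond this bookkeeping there is no substantial obstacle.
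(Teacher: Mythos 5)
Your proof is correct and follows essentially the same route as the paper: the change-of-variables identity showing that the integral means of $\chi_{M^{-1}(F)}$ with respect to $B$ at $x$ coincide with those of $\chi_F$ with respect to $B_M$ at $M(x)$, plus the fact that $M$ preserves the shrinking condition and null sets. You merely spell out the bookkeeping (bi-Lipschitz equivalence of $\diam R\to 0$ and $\diam M(R)\to 0$, transfer of the exceptional null set under $M$) that the paper leaves implicit.
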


\begin{proof}
Observe that for every bounded measurable set $E$, $x\in\mathbb{R}^n$ and $R\in B(x)$,
$$
\frac{|M(R)\cap M(E)|}{|M(R)|}=\frac{|M(R\cap E)|}{|M(R)|}=\frac{|J_M||R\cap E|}{|J_M||R|}= \frac{|R\cap E|}{|R|}.
$$
Here $J_M$ denotes the Jacobian of the mapping $M$.  Thus, the integral means with respect to $B$ of the function $\chi_{E}$ at the point $x$ are the same as the integral means with respect to $B_M$ of the function $\chi_{M(E)}$ at the point $M(x)$. It easily implies the differentiability of  $\int\chi_{M(E)}$ with respect to $B_M$. Since $E$ is an arbitrary bounded measurable set we conclude that $B_M$ is a density basis. The lemma is proved.
\end{proof}

We will  need the following characterization of translation invariant density bases proved by Hagelstein and Parissis [4].

\medskip

\textbf{Theorem A.} \emph{Let $B$ be a translation invariant basis. Then the following properties are equivalent:} 1) $B$ \emph{is a density basis;} 2) \emph{For each $\lambda\in (0,1)$ there exist  positive constants $r(B,\lambda)$ and  $c(B,\lambda)$ such that for each measurable set $E$ one has: $|\{M_B^{r(B,\lambda)}(\chi_E)\geq \lambda\}|\leq c(B,\lambda)|E|$.}

\medskip

Note that Theorem A for  Busemann-Feller translation invariant bases was proved in  [6].

\smallskip

Below without loss of generality assume that for a convex basis $B$ sets forming $B$ (i.e. sets from the spread $\overline{B}$)  are closed.

For a set $A $ with the centre of symmetry at a point $x$ and for a number $\alpha>0$
we denote by $\alpha A$ the dilation of $A$ with coefficient $\alpha$: $\alpha A = \{x+\alpha(y-x) : y \in A\}$.

 By F. John it was proved that (see, e.g., [1, p.139])  for any    bounded closed convex set $E$ in $\mathbb{R}^n$ with positive measure there exists a closed  ellipsoid $T$ for which $T\subset E\subset nT$. This assertion easily implies the following lemma (see, [3, Lemma 3] for details).

\begin{lemma}\label{l8}
For any    bounded closed convex set $E$ in $\mathbb{R}^n$ with positive measure there exists a closed $n$-dimensional rectangle $R$ such that $R\subset E\subset n^2 R$.

\end{lemma}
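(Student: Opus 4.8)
The plan is to feed the ellipsoid supplied by F.~John's theorem into the elementary inscribed/circumscribed box construction for ellipsoids. First I would invoke the quoted result of John to fix a closed ellipsoid $T$ with $T\subset E\subset nT$, and record for $T$ a centre $x$, an orthonormal system of principal directions $e_1,\dots,e_n$, and the corresponding semi-axes $a_1,\dots,a_n>0$ (all positive, since $E$, hence $T$, has positive measure), so that $T=\{y:\sum_{i=1}^n\langle y-x,e_i\rangle^2/a_i^2\le 1\}$.

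Next I would take $R$ to be the closed $n$-dimensional rectangle centred at $x$, with sides parallel to the $e_i$ and half-length $a_i/\sqrt n$ in the direction $e_i$, that is $R=\{y:|\langle y-x,e_i\rangle|\le a_i/\sqrt n,\ i=1,\dots,n\}$. The two inclusions relating $R$ and $T$ are then immediate. If $y\in R$ then $\sum_i\langle y-x,e_i\rangle^2/a_i^2\le\sum_i(1/n)=1$, so $R\subset T$. On the other hand $n^{1/2}R=\{y:|\langle y-x,e_i\rangle|\le a_i,\ i=1,\dots,n\}$, and every $y\in T$ satisfies $\langle y-x,e_i\rangle^2/a_i^2\le 1$ for each $i$ (each summand being at most the full sum), so $T\subset n^{1/2}R$.

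Combining these with John's inclusions gives $R\subset T\subset E$ on one side, and $E\subset nT\subset n\cdot n^{1/2}R=n^{3/2}R$ on the other. Since $R$ is convex and symmetric about its own centre, its dilations about that centre are nested, and $n^{3/2}\le n^2$ for $n\ge 1$; hence $n^{3/2}R\subset n^2R$ and so $E\subset n^2R$, which is the required estimate.

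There is no genuine obstacle here: once John's theorem is granted, the argument is a routine computation with ellipsoids. The only point deserving a moment's care is the bookkeeping of dilation constants — the construction naturally produces the factor $n^{3/2}$, which is then absorbed into the (non-optimal but convenient) bound $n^2$ stated in the lemma — together with checking that one is allowed to dilate both sides of an inclusion about a common centre, which is clear from the definition of $\alpha A$.
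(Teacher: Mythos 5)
Your argument is correct and follows the same route the paper takes: it deduces the lemma from F.~John's ellipsoid theorem ($T\subset E\subset nT$), the paper simply deferring the elementary inscribed/circumscribed box computation to [3, Lemma 3] rather than writing it out. Your bookkeeping of the dilation centres is sound, and you in fact obtain the sharper inclusion $E\subset n^{3/2}R$, which of course implies the stated $E\subset n^2R$.
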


For every set $E$ from $B(0)$ using Lemma 7 we can find  a closed rectangles $R_\ast(E)$ and $R^\ast(E)$ such that $R^\ast(E)=n^2R_\ast(E)$ and  $R_\ast(E)\subset E\subset R^\ast(E)$.

Let $B_\ast$ and $B^\ast$ be the translation invariant bases the values of which at the origin are defined as  follows:
$$
B_\ast(0)=\{\{0\}\cup R_\ast(E): E\in B(0)\},
$$
$$
B^\ast(0)=\{R^\ast(E): E\in B(0)\}.
$$
Note that the sets forming $B_\ast$ in general are not rectangles. The reason of it is that  for a set  $E\in B(0)$  the rectangle $R_\ast(E)$ may not  contain the origin. It is easy to check the that there are valid the majorizations: $B_\ast \leq B \leq B_{\mathrm{BF}} \leq B_{\mathrm{BF}}^\ast$, moreover, for every $f\in L(\mathbb{R}^n)$, $r\in (0,\infty]$ and $x\in\mathbb{R}^n$ there are valid the pointwise estimations: $M_{B_\ast}^r(f)(x)\leq n^{2n} M_{B}^{n^2 r}(f)(x),$ $M_{B}^{r}(f)(x)\leq M_{B_\mathrm{BF}}^{r}(f)(x),$ $M_{B_\mathrm{BF}}^{r}(f)(x)\leq n^{2n} M_{B_{\mathrm{BF}}^\ast}^{n^2 r}(f)(x).$

For a basis $B$ and a positive number $\alpha$ by $c B$ denote the basis $B_M$ where $M$ is the homothecy with centre at the origin and the coefficient equal to $c$.

For a basis $B$ by $B_{\mathrm{sym}}$ denote the basis $B_M$ where $M$ is the symmetry with respect to the origin.

\begin{lemma}\label{l8}
 For every $f\in L(\mathbb{R}^n)$ and $r \in (0,\infty]$,
$$
\{M_{B_{\mathrm{BF}}^\ast}^r (f)\geq \lambda\}\subset \{M_{(c_2B_\ast)_{\mathrm{sym}}}^{c_1 r} (\chi_{\{M_{c_4 B_\ast}^{c_3r}(f)\geq \lambda/c_5\}})\geq 1/c_6 \},
$$
where $c_1,\dots ,c_6$ are positive constants depending only on $n$.
\end{lemma}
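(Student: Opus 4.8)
The plan is to run a two-step covering argument: we bound the Busemann-Feller maximal function at a point $x$ by the $(c_2B_\ast)_{\mathrm{sym}}$-maximal function applied to the indicator of a level set of the $c_4B_\ast$-maximal function. The one new feature (compared with the usual covering scheme) is that a set of $B_\ast(0)$ is a box together with the extra point $0$, and this box need not contain $0$. Concretely, fix $x$ with $M_{B_{\mathrm{BF}}^\ast}^r(f)(x)\ge\lambda$ (one may assume $\lambda>0$, the remaining case being trivial). Since $\overline{B^\ast}=\{R^\ast(E)+y:E\in B(0),\,y\in\mathbb{R}^n\}$, there are $E\in B(0)$ and $y\in\mathbb{R}^n$ with $x\in R:=R^\ast(E)+y$, $\diam R<r$ and $\frac1{|R|}\int_R|f|>\frac34\lambda$. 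Put $Q:=R_\ast(E)$ and, for $z\in\mathbb{R}^n$ and $t>0$, write $z+tQ:=\{z+tw:w\in Q\}$; this is the image of $Q$ under the homothety $u\mapsto tu$ about the origin, translated by $z$ — the way the scaled basis $tB_\ast$ actually enters — and, in orthonormal coordinates making $Q$ axis-parallel, say with centre $c$ and half-sides $\ell_1,\dots,\ell_n$, it is the box of centre $z+tc$ and half-sides $t\ell_j$. Two elementary facts will be used: (a) since $E\in B(0)$, $0\in E\subset R^\ast(E)=n^2Q$, so $|c_j|\le n^2\ell_j$ for every $j$; (b) $\diam Q=\diam R/n^2<r/n^2$, hence $|c|\le n^2(\sum_j\ell_j^2)^{1/2}=\frac{n^2}{2}\diam Q<r/2$. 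Set $c_1=c_2=c_3=c_4=2n^2$, $c_5=2^{n+1}$, $c_6=2^n$; every containment below is verified by coordinatewise bookkeeping with these values.

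Step 1 (a box inside a level set of $M_{c_4B_\ast}$): let $P:=\{z:R\subseteq z+c_4Q\}$. Comparing the boxes $R$ (centre $c+y$, half-sides $n^2\ell_j$) and $z+c_4Q$ (centre $z+c_4c$, half-sides $2n^2\ell_j$) identifies $P$ as the box of centre $(1-c_4)c+y$ and half-sides $(c_4-n^2)\ell_j=n^2\ell_j$, so $|P|=|R|$. For $z\in P$ the set $E_z:=\{z\}\cup(z+c_4Q)$ is the image of $\{0\}\cup Q\in B_\ast(0)$ under $u\mapsto c_4u$ followed by the translation by $z$, hence $E_z\in(c_4B_\ast)(z)$; by (a) and (b) its diameter is at most $|c_4c|+\diam(c_4Q)<(n^2+2)r<c_3r$; and since $R\subseteq z+c_4Q$ and $|z+c_4Q|=(2c_4)^n\prod_j\ell_j=2^n|R|$, we obtain $\frac1{|E_z|}\int_{E_z}|f|=\frac1{|z+c_4Q|}\int_{z+c_4Q}|f|\ge 2^{-n}\cdot\frac34\lambda>\lambda/c_5$. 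Thus $P\subseteq G:=\{M_{c_4B_\ast}^{c_3r}(f)\ge\lambda/c_5\}$.

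Step 2 ($x$ sees $G$ through a $(c_2B_\ast)_{\mathrm{sym}}$-set): the set $S:=\{x\}\cup(x-c_2Q)$, where $x-c_2Q:=\{x-c_2w:w\in Q\}$ is the box of centre $x-c_2c$ and half-sides $2n^2\ell_j$, is the image of $\{0\}\cup Q\in B_\ast(0)$ under $u\mapsto-c_2u$ and the translation by $x$, so $S\in(c_2B_\ast)_{\mathrm{sym}}(x)$, and $\diam S<c_1r$ just as in Step 1. From $x\in R=R^\ast(E)+y$ we have $|x_j-c_j-y_j|\le n^2\ell_j$ for all $j$, and a coordinatewise check — this is exactly where the equality $c_2=c_4$ is used — shows $P\subseteq x-c_2Q$. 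Hence $|S\cap G|\ge|P|=|R|=2^{-n}|x-c_2Q|=2^{-n}|S|$, i.e. $\frac1{|S|}\int_S\chi_G\ge 2^{-n}=1/c_6$, so $M_{(c_2B_\ast)_{\mathrm{sym}}}^{c_1r}(\chi_G)(x)\ge1/c_6$. Since $x$ was an arbitrary point of the left-hand set, the inclusion follows.

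The main obstacle is precisely this bookkeeping (together with the diameter estimates): because $Q=R_\ast(E)$ need not contain the origin, the box $z+c_4Q$ attached by $B_\ast$ to a point $z$ is displaced from $z$ by $c_4c$, of size comparable to the box itself, so without extra information no basis set through $z$ could reach $R$. The inequality $|c_j|\le n^2\ell_j$ forced by $0\in E$ bounds this displacement in terms of the box's own dimensions, and passing to $(\cdot)_{\mathrm{sym}}$ (which replaces $Q$ by $-Q$) with equal dilation coefficients $c_2=c_4=2n^2$ is exactly what makes $z+c_4Q$ cover $R$ for all $z\in P$ and $x-c_2Q$ cover $P$ — the constants coming out tight but admissible. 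Finally, all bases appearing are translation invariant, hence measurable, so $G$ and the final set are measurable and the argument is legitimate.
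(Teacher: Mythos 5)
Your proof is correct and follows essentially the same route as the paper's: you identify the box $P$ of translates $z$ for which the dilated $B_\ast$-set at $z$ covers the rectangle $R$ containing $x$, show $P$ lies in the level set of $M_{c_4 B_\ast}^{c_3 r}$, and then catch $P$ from $x$ by a set of the reflected dilated basis $(c_2 B_\ast)_{\mathrm{sym}}$, exactly the two-step scheme of the paper. The differences are only in bookkeeping — you use the dilation coefficient $2n^2$ instead of the paper's $4n^2$ (hence slightly different $c_5$, $c_6$) and define $P$ directly as $\{z: R\subseteq z+c_4Q\}$ instead of the paper's ``left of the centre'' normalization and explicit central symmetry of $R^\ast\cup P$ — which is immaterial, since the lemma only requires constants depending on $n$.
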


\begin{proof}
Suppose $M_{B_{\mathrm{BF}}^\ast}^r (f)(x)\geq \lambda$. Let  $R^\ast \in B_{\mathrm{BF}}^\ast(x)$ be a rectangle for which $\diam R^\ast<r$ and $\int_{R^\ast} |f|\geq\lambda|R^\ast|/2$. Denote $R_\ast=\frac{1}{n^2}R^\ast$. It is easy to see that there is a point $x_\ast\in R^\ast$  for which $\{x_\ast\}\cup R_\ast\in B_\ast(x_\ast)$. Without loss of generality assume that $R^\ast$ is an interval of the type $[0,t_1]\times \dots \times [0,t_n]$ and the point $x_\ast$ is to the "left" of the centre of $R_\ast$, i.e. $x_1\leq t_1/2,\dots, x_n\leq t_n/2$. Let us consider the interval $R$ with the centre at the origin which is the translate of $4R^\ast$. It is easy to see that there is a point $y$ lying to left form the origin for which $\{y\}\cup R\in (4n^2 B_\ast)(y)$. Obviously, $y$ belongs to the rectangle $n^2R$. Now let us consider the set of all points $y+t$ with the properties: $t$ lies to the left from the origin and $R+t\supset R^\ast$. It is easy to check that $P$ is the translate of the rectangle $R^\ast$. Let us consider an arbitrary point $y+t$ from $P$. Since $\{y\}\cup R\in (4n^2 B_\ast)(y)$  then $\{y+t\}\cup (R+t)\in (4n^2 B_\ast)(y+t)$. Consequently, taking into account that $\diam(\{y+t\}\cup (R+t))= \diam(\{y\}\cup R)\leq \diam (n^2R)=4n^2\diam R< 4n^2r$ we have
$$
M_{4n^2 B_\ast}^{4n^2r}(f)(y+t)\geq \frac{1}{|R|}\int_{R+t}|f|\geq \frac{1}{4^n|R^\ast|}\int_{R^\ast}|f|\geq\frac{\lambda}{2 \cdot4^n}.
$$
Thus,
$$
P\subset\{M_{4n^2 B_\ast}^{4n^2r}(f)\geq\lambda/(2 \cdot4^n)\}.\eqno(9)
$$
Let $z_0$ be the centre of symmetry of the union of the rectangles $R^\ast$ and $P$. By $S$ denote the symmetry with respect to $z_0$.  Clearly, $S(x)\in P$. Then $\{S(x)\}\cup (R+s(x)-y)\in (4n^2 B_\ast)(S(x))$ and $(R+S(x)-y)\supset R^\ast$. Consequently, taking into account the definition of the basis $(4n^2 B_\ast)_{\mathrm{sym}}$ we have that $\{x\}\cup (S(R)+x-S(y))\in (4n^2 B_\ast)_{\mathrm{sym}}(x)$ and $(S(R)+x-S(y))\supset P$. Wherefrom we have
$$
M_{(4n^2 B_\ast)_{\mathrm{sym}}}^{4n^2r}(\chi_P)(x)\geq \frac{1}{|S(R)|}\int_{S(R)+x-S(y)}\chi_P=\frac{1}{4^n|R^\ast|}|P|=\frac{1}{4^n}.\eqno(10)
$$
From $(9)$ and $(10)$ we conclude the validity of the lemma.

\end{proof}

\begin{lemma}\label{l8}
 For every $f\in L(\mathbb{R}^n)$ and $r \in (0,\infty]$,
$$
\{M_{B_{\mathrm{BF}}^\ast}^r (f)\geq \lambda\}\subset \{M_{3 B_{\mathrm{BF}}^\ast}^{3r} (\chi_{\{M_{B_\ast}^{r}(f)\geq \lambda/\alpha_1\}})\geq 1/\alpha_2 \},
$$
where $\alpha_1$ and $\alpha_2$ are positive constants depending only on $n$.
\end{lemma}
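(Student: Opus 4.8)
The plan is to begin exactly as in the proof of Lemma 8: fix a point $x$ with $M_{B_{\mathrm{BF}}^\ast}^r(f)(x)\geq\lambda$, choose a rectangle $R^\ast\in B_{\mathrm{BF}}^\ast(x)$ with $\diam R^\ast<r$ and $\frac1{|R^\ast|}\int_{R^\ast}|f|\geq\lambda/2$, write $R^\ast$ as a translate of $R^\ast(E)$ for some $E\in B(0)$, and set $R_\ast=\frac1{n^2}R^\ast$ (the concentric dilate), which is the corresponding translate of $R_\ast(E)$. From here I would deviate from Lemma 8 in two ways. Since $B_{\mathrm{BF}}^\ast$ is Busemann-Feller, \emph{every} rectangle of $\overline{B^\ast}$ containing $x$ is automatically available at $x$, so the symmetrization device of Lemma 8 is unnecessary — a suitable translate of $3R^\ast$ will serve as the outer set directly. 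And in place of the containment step used there (where $R+t$ must contain $R^\ast$, which is what forces the dilation $c_4B_\ast$ of the inner basis), I would use a soft averaging estimate that keeps the inner operator equal to $M_{B_\ast}^r$ at the original scale $r$.

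The first step is as in Lemma 8: there is a point $x_\ast\in R^\ast$ with $\{x_\ast\}\cup R_\ast\in B_\ast(x_\ast)$ (using $0\in E\subset R^\ast(E)$), and then by translation invariance $\{x_\ast+t\}\cup(R_\ast+t)\in B_\ast(x_\ast+t)$ for every $t$, of diameter $\diam(\{x_\ast\}\cup R_\ast)\leq\diam R^\ast<r$ since this set lies in $R^\ast$; hence for every $t\in\bR^n$,
$$
M_{B_\ast}^r(f)(x_\ast+t)\geq\frac1{|R_\ast|}\int_{R_\ast+t}|f|.
$$
The second — and crucial — step is the averaging estimate. Put $h=|f|\chi_{R^\ast}$ and $A(t)=\frac1{|R_\ast|}\int_{R_\ast+t}h$. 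Then $\int_{\bR^n}A=\int h\geq\frac\lambda2|R^\ast|$, $\|A\|_\infty\leq\int h/|R_\ast|$, and $A$ is supported in the rectangle $\{t:(R_\ast+t)\cap R^\ast\neq\vnth\}$, of measure $(1+n^{-2})^n|R^\ast|<2n^{2n}|R_\ast|$. Since $\int A\leq\|A\|_\infty\,|\{A\geq s\}|+s\,|\supp A|$ for any $s>0$, one gets $|\{A\geq s\}|\geq|R_\ast|\bigl(1-s\,|\supp A|/\!\int h\bigr)$; taking $s=\lambda/8$ and using the lower bound on $\int h$ gives $|W|\geq\frac12|R_\ast|$, where $W:=\{t:A(t)\geq\lambda/8\}$. (The factor $\int h$ cancels, so this is insensitive to how $f$ concentrates on $R^\ast$.)

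The third step assembles the conclusion. By the first step $x_\ast+W\subset G:=\{M_{B_\ast}^r(f)\geq\lambda/8\}$. Since $W$ lies in the rectangle above, a routine coordinatewise estimate shows that $\{x\}\cup(x_\ast+W)$ has extent at most $2L_i$ along the $i$-th axis, where $L_i$ is the $i$-th side of $R^\ast$; hence there is a translate $\widehat{S}$ of $3R^\ast$ with $x\in\widehat{S}\supset x_\ast+W$. As $R^\ast\in\overline{B^\ast}$, this $\widehat{S}$ is a translate of $3R^\ast(E)$, so $\widehat{S}\in\overline{3B^\ast}$ and, containing $x$, $\widehat{S}\in 3B_{\mathrm{BF}}^\ast(x)$; moreover $\diam\widehat{S}<3r$. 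Therefore
$$
M_{3B_{\mathrm{BF}}^\ast}^{3r}(\chi_G)(x)\geq\frac{|G\cap\widehat{S}|}{|\widehat{S}|}\geq\frac{|W|}{3^n|R^\ast|}\geq\frac{1}{2\cdot3^n n^{2n}},
$$
which proves the lemma with $\alpha_1=8$ and $\alpha_2=2\cdot3^n n^{2n}$.

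The only genuinely delicate point is the averaging estimate of the second step: once one realizes that truncating $f$ to $R^\ast$ makes $\int A=\int h$ and $\|A\|_\infty\leq\int h/|R_\ast|$ scale together — so that the size of $\{A\geq\lambda/8\}$ is bounded below purely by geometry — the rest is bookkeeping, the main item being the verification that $W$ does not spread out, which is what permits the small dilation factor $3$ (indeed $2$ would already suffice).
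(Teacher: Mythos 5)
Your proof is correct, and its skeleton coincides with the paper's: start from a rectangle $R^\ast\in B^\ast_{\mathrm{BF}}(x)$ with $\diam R^\ast<r$ and mean of $|f|$ at least $\lambda/2$, locate $x_\ast\in R^\ast$ with $\{x_\ast\}\cup R_\ast\in B_\ast(x_\ast)$ (so that translation invariance gives $\{x_\ast+t\}\cup(R_\ast+t)\in B_\ast(x_\ast+t)$ with diameter $<r$), produce a set of measure comparable to $|R_\ast|$ of points $x_\ast+t$ at which $M^r_{B_\ast}(f)\geq\lambda/\alpha_1$, and capture that set together with $x$ by an element of $3B^\ast_{\mathrm{BF}}(x)$ of diameter $<3r$. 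Where you genuinely differ is the middle step: the paper partitions $R^\ast$ into translates of $\frac{1}{2}R_\ast$, pigeonholes one cell $I$ with $\frac{1}{|I|}\int_I|f|\geq\lambda/2$, and takes $P=\{x_\ast+t:\ R_\ast+t\supset I\}$ (a translate of $I$, contained in $3R^\ast$), obtaining $P\subset\{M^r_{B_\ast}(f)\geq\lambda/2^{n+1}\}$; you instead slide $R_\ast$ over $R^\ast$ and apply a Fubini--Chebyshev bound to $A(t)=\frac{1}{|R_\ast|}\int_{R_\ast+t}|f|\chi_{R^\ast}$ (using $\int A=\int_{R^\ast}|f|$, $\|A\|_\infty\leq\int_{R^\ast}|f|/|R_\ast|$, and $|\supp A|=(1+n^{-2})^n|R^\ast|<2|R^\ast|$ for $n\geq2$) to get $|W|\geq\frac{1}{2}|R_\ast|$ with $x_\ast+W\subset\{M^r_{B_\ast}(f)\geq\lambda/8\}$. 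Your route avoids selecting a particular cell and yields a good set of measure about $|R_\ast|$ rather than $|R_\ast|/2^n$, hence marginally better constants ($\alpha_1=8$, $\alpha_2=2\cdot3^nn^{2n}$ versus the paper's $2^{n+1}$ and $6^nn^{2n}$), at the cost of the coordinatewise check (which you do correctly) that $\{x\}\cup(x_\ast+W)$ has extent at most $2L_i$ per axis so that it fits in a translate of $3R^\ast$ belonging to $3B^\ast_{\mathrm{BF}}(x)$; in fact, since $W\subset\{t:(R_\ast+t)\cap R^\ast\neq\vnth\}$, one even has $x_\ast+W\subset 3R^\ast$, so you could have used the concentric $3R^\ast$ exactly as the paper does and skipped the translation. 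Both arguments keep the inner operator at the original scale $r$, and both are complete; yours is a mild but genuine variant of the paper's decomposition step.
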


\begin{proof}
Suppose $M_{B_{\mathrm{BF}}^\ast}^r (f)(x)\geq \lambda$. Let  $R^\ast \in B_{\mathrm{BF}}^\ast(x)$ be a rectangle for which $\diam R^\ast<r$ and $\int_{R^\ast} |f|\geq\lambda|R^\ast|/2$.  Denote $R_\ast=\frac{1}{n^2}R^\ast$. It is easy to see that there is a point $x_\ast\in R^\ast$  for which $\{x_\ast\}\cup R_\ast\in B_\ast(x_\ast)$. Let us decompose  $R^\ast$  into rectangles which are translates of $\frac{1}{2}R_\ast$.  One of this rectangles obviously will satisfy the estimation: $\int_I |f|\geq\lambda|I|/2$. Denote
$$
P=\{x_\ast+t: t\in \mathbb{R}^n, R_\ast+t\supset I\}.
$$
It is easy to see that $P$ is the translate of $I$ and $P\subset 3R^\ast$. Let us consider an arbitrary point $x_\ast+t$ from $P$. Since $\{x_\ast\}\cup R_\ast\in B(x_\ast)$  then $\{x_\ast+t\}\cup (R_\ast+t)\in B(x_\ast+t)$. Consequently, taking into account that $\diam(\{x_\ast+t\}\cup (R_\ast+t))=\diam(\{x_\ast\}\cup R_\ast)\leq \diam R^\ast<r$ we have
$$
M_{B_\ast}^r(f)(x_\ast+t)\geq \frac{1}{|R_\ast|}\int_{R_\ast+t}|f|\geq \frac{1}{2^n|I|}\int_I|f|\geq\frac{\lambda}{2^{n+1}}.
$$
Thus,
$$
P\subset\{M_{B_\ast}^r(f)\geq\lambda/2^{n+1}\}. \eqno(11)
$$
By  the inclusion  $P\subset 3R^\ast$ we write
$$
M_{3B_{\mathrm{BF}}^\ast}^{3 r} (\chi_P)(x)\geq \frac{1}{|3R^\ast|}\int_{3R^\ast}\chi_P=\frac{|P|}{6^n n^{2n}|I|}=\frac{1}{6^n n^{2n}}. \eqno(12)
$$
From $(11)$ and $(12)$ we conclude the validity of the lemma.
\end{proof}

\medskip

Now let us move directly to the proof of Theorem 3.  Since $B$ is a density basis then by virtue of majorisation $B_\ast\leq B$ and Theorem A we conclude that $B_\ast$ also is a density basis. Further using Lemma 6 by Theorem A we have that $c_4B_\ast$ and $(c_2B_\ast)_{\mathrm{sym}}$ are density basis. Consequently, using Lemma 8 and Theorem A it is easy to see that  $B_{\mathrm{BF}}^\ast$ is a density basis. Hence by virtue of Lemma 9 and Theorem A we conclude the validity of the majorization $B_{\mathrm{BF}}^\ast\leq B_\ast$. Now, taking into account the relations $B_\ast \leq B \leq B_{\mathrm{BF}} \leq B^\ast_{\mathrm{BF}}$ and using Theorem A we conclude that $B_{\mathrm{BF}}$ is a density basis and  the bases $B_\ast, B, B_{\mathrm{BF}}$ and $B^\ast_{\mathrm{BF}}$ locally majorize each other. The theorem is proved.

\begin{remark}\label{r2}
Taking into account the properties of the basis $B^\ast$ from the proof of Theorem 3 we  obtain the following result: For every translation invariant convex density basis $B$ there exists a translation  invariant Busemann-Feller basis $B^\ast$ formed by $n$-dimensional rectangles such that the integral of an arbitrary non-negative function $f\in L(\mathbb{R}^n)$ at almost every point $x\in \mathbb{R}^n$ have one and the same type limits of indeterminacy with respect to the bases $B$ and $B^\ast$.
\end{remark}

\begin{remark}\label{r2}
Let us recall the following well-known result (see, e.g., [1,p.77]): If  $B$ is  a density basis, $f\in  L(\mathbb{R}^n)$ and $B$ differentiates $\Int |f|$ then $B$ differentiates $\Int f$ also. Taking into account this result from Theorem 1 we obtain the following assertion: Let  $B$ be a  translation invariant convex density basis and $\varphi(L)(\mathbb{R}^n)$ be a some integral class. Then $B$ differentiates $\varphi(L)(\mathbb{R}^n)$ if and only if   $B_{\mathrm{BF}}$ differentiates $\varphi(L)(\mathbb{R}^n)$.

\end{remark}

\begin{remark}\label{r2}

It is true  the following characterization of homothecy invariant density basis (see [1, p.69]).

\medskip

\textbf{Theorem B}. Let $B$ be a homothecy invariant basis. Then the following properties are equivalent: 1)  $B$  a density basis; 2)  For each $\lambda\in (0,1)$ there exists a positive constant $c(B,\lambda)$ such that for each bounded measurable set $E$ one has: $|\{M_B(\chi_E)\geq \lambda\}|\leq c(B,\lambda)|E|$.

\medskip

Using the above result instead of Theorem A it can be proved that if a basis $B$ in Theorem 3 is homothecy invariant then $B$ and $B_{\mathrm{BF}}$  majorize each other.

\end{remark}

\medskip
\medskip
\textbf{4. Some applications.}

\smallskip

\textbf{I}. Besikovitch [7] proved the following theorem about the limits of indeterminacy for the strong differentiation process: For an arbitrary  function $f\in L(\mathbb{R}^2)$ at almost every  point $x\in \mathbb{R}^2$ there are valid the implications:
$$
\underline{D}_\textbf{I}(\Int f,x)\neq f(x) \;\Rightarrow \;\underline{D}_\textbf{I}(\Int f,x)=-\infty,
$$
$$
\overline{D}_\textbf{I}(\Int f,x)\neq f(x) \;\Rightarrow \;\overline{D}_\textbf{I}(\Int f,x)=\infty.
$$

An analog of this result for the multi-dimensional case was obtained by Ward [8]. Note that the multi-dimensional extension also may be obtained using the version of F. Riesz "rising sun" lemma proved in [9].

Guzm\'an [10, p.389] posed the following problem: \emph{To what bases can Besicovitch's result be extended}?

We say that a basis $B$ possesses the \emph{Besicovitch property}  (the \emph{weak Besicovitch property}) if for an arbitrary function $f\in L(\mathbb{R}^n)$ (for an arbitrary non-negative function $f\in L(\mathbb{R}^n)$) at almost every  point $x\in \mathbb{R}^n$ there are valid the implications:
$$
\underline{D}_B(\Int f,x)\neq f(x) \;\Rightarrow \;\underline{D}_B(\Int f,x)=-\infty,
$$
$$
\overline{D}_B(\Int f,x)\neq f(x) \;\Rightarrow \;\overline{D}_B(\Int f,x)=\infty.
$$

\begin{remark}\label{r2}
 If a  $B$ is not a density basis then it is easy to see that there is a measurable bounded set $E$ for which the set $\{x\in E:\underline{D}_B(\Int \chi_E,x)<1\}$ is not of measure zero. Consequently, if a basis $B$ possesses the weak Besikovitch property then $B$ is a  density basis.

\end{remark}

It is valid the following characterization of homothecy invariant convex bases possessing the weak Besicovitch property.

\begin{theorem}
A homothecy invariant  convex basis $B$ possesses the weak Besicovitch property if and only if $B$ is  a density basis.
\end{theorem}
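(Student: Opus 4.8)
The plan is to prove the two implications separately: the "only if" direction is immediate, and the "if" direction will be reduced, via Theorem 1, to the Busemann--Feller case. First, if a homothecy invariant convex basis $B$ has the weak Besicovitch property, then by Remark 12 it is a density basis; this needs nothing further. So the content is the converse.

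Assume now that $B$ is a homothecy invariant (hence translation invariant) convex density basis and fix a non-negative $f\in L(\mathbb{R}^n)$. By Remark 3 the integral of $f$ has, at almost every point, one of the types (a), (b), (c), and in each of them $\underline{D}_B(\Int f,x)=f(x)$. Consequently the first implication in the definition of the weak Besicovitch property holds vacuously a.e., and the second one is equivalent to the assertion that type (b) occurs only on a null set, i.e.
$$
\big|\big\{x\in\mathbb{R}^n:\ f(x)<\overline{D}_B(\Int f,x)<\infty\big\}\big|=0. \eqno(\ast)
$$
Since $B$ is a translation invariant convex density basis, Theorem 1 says that $\Int f$ has, at almost every point, the same type of limits of indeterminacy with respect to $B$ and to $B_{\mathrm{BF}}$; and $B_{\mathrm{BF}}$ is again homothecy invariant and convex (property 4 after the definition of $B_{\mathrm{BF}}$) and a density basis (Theorem 3). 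Hence it suffices to prove $(\ast)$ for $B_{\mathrm{BF}}$, i.e.\ one may assume in addition that $B$ is Busemann--Feller.

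It remains to establish $(\ast)$ for a homothecy invariant convex Busemann--Feller density basis $B$ — the analogue of the Besicovitch--Ward theorem for the present class of bases, and the place where the real work lies. Suppose $(\ast)$ fails; then for suitable $\lambda>0$ and $N<\infty$ the set $E=\{x:\ \lambda\le \overline{D}_B(\Int f,x)-f(x)\le N\}$ has positive measure. Put $g=f^{[a]}=f\chi_{\{f>a\}}$; then $\|g\|_{1}\to 0$ as $a\to\infty$, while on $\{f<a\}$ one has $\overline{D}_B(\Int f,x)=f(x)+\overline{D}_B(\Int g,x)$ because $B$ differentiates the bounded function $f_{[a]}$. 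Thus for $a$ large a subset $E'\subset E$ of positive measure satisfies $\lambda\le\overline{D}_B(\Int g,x)\le N$. The crucial use of the finiteness is that $\overline{D}_B(\Int g,\cdot)\le N$ lets us split $E'$ according to the scale below which the means of $g$ no longer exceed $N+\lambda/2$: some piece $E'_m$ of positive measure has $M_B^{1/m}(g)\le N+\lambda/2$ on $E'_m$ while still $M_B^{r}(g)\ge\lambda$ there for every $r$. Fixing a cube $Q$ in which $E'_m$ has density close to $1$, covering $E'_m\cap Q$ by sets $R\in B(x)$ of diameter $<1/m$ on which the mean of $g$ is $\ge\lambda/2$, decomposing $g$ into dyadic level pieces $g\lesssim\sum_j 2^{j}\chi_{S_j}$ with $|S_j|\lesssim\|g\|_{1}2^{-j}$, and feeding this into the halo estimate of Theorem B (available since $B$ is a homothecy invariant density basis), together with the uniform bound $N+\lambda/2$ on $E'_m$ and a Vitali-type selection, one estimates $|E'_m\cap Q|$ by a quantity that tends to $0$ as $a\to\infty$ — a contradiction once $a$ is large. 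This yields $(\ast)$ and finishes the proof.

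The reductions — to what must be proven, via Remark 3, and to the Busemann--Feller case, via Theorems 1 and 3 — are routine. The genuine obstacle is the Busemann--Feller case: the hypothesis $\overline{D}_B(\Int f,\cdot)<\infty$ must be used in an essential way, since without it the same scheme would spuriously force $B$ to differentiate every non-negative $L^{1}$ function, which is false for, e.g., Zygmund-type bases. Thus the heart of the matter is the passage from the $x$-dependent scales at which the means of the tail $f^{[a]}$ are controlled to a single scale $1/m$ valid on a set $E'_m$ of positive measure, and the bookkeeping — crucially relying on homothecy invariance to keep all constants scale-independent — that prevents the constants in Theorem B from swallowing the gain $\|f^{[a]}\|_{1}\to 0$.
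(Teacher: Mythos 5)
Your two reductions are exactly the ones the paper makes: necessity of the density property is the paper's Remark 9, and the passage from $B$ to $B_{\mathrm{BF}}$ is precisely the combination of Theorem 1 (same type of limits of indeterminacy for non-negative $f$) with Theorem 3 (density of $B_{\mathrm{BF}}$). At that point, however, the paper simply quotes the Busemann--Feller case from [3] (``a Busemann--Feller homothecy invariant convex density basis possesses the weak Besicovitch property''), whereas you undertake to reprove it, and this is where your argument has a genuine gap. The setup (splitting $f=f_{[a]}+f^{[a]}$, using that a density basis differentiates bounded functions, extracting a set $E'_m$ of positive measure on which $M_B^{1/m}(g)\le N+\lambda/2$ while $M_B^r(g)\ge\lambda$ for all $r$) is fine, but the concluding step --- ``feeding'' the dyadic decomposition $g\lesssim\sum_j 2^j\chi_{S_j}$ into the halo estimate of Theorem B ``together with the uniform bound $N+\lambda/2$ and a Vitali-type selection'' --- is a list of ingredients, not an argument. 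Theorem B gives, for each fixed level, only a weak-type bound $|\{M_B(\chi_E)\ge\lambda\}|\le c(B,\lambda)|E|$ with no control whatsoever on the growth of $c(B,\lambda)=\varphi_B(1/\lambda)$; summing the level-set estimates produces something of the form $\sum_j \varphi_B(C2^j/\lambda)\,2^{-j}\|g\|_1$, which need not be finite, let alone tend to $0$ as $a\to\infty$. You yourself note that the upper bound $\overline{D}_B(\int g,\cdot)\le N$ must enter essentially (otherwise the scheme would prove that $B$ differentiates all non-negative $L^1$ functions, which is false), but you never say how it enters; that passage is exactly the hard content of the theorem of [3], and it is missing.

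A telling symptom is that your sketch of the Busemann--Feller case uses only homothecy invariance and the density property, never convexity. Convexity is an essential hypothesis in [3], whose proof goes through John's lemma and a reduction to rectangle bases (the same circle of ideas as Lemma 7 and the proof of Theorem 3 in this paper); a soft argument from Theorem B alone would establish the weak Besicovitch property for arbitrary homothecy invariant density bases, a far stronger claim that your outline certainly does not deliver. So either cite the result of [3] for the Busemann--Feller case --- in which case your proof coincides with the paper's --- or supply a genuine proof of that case, which your present sketch does not contain.
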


Theorem 4 we obtain using Theorem 1 and the next result proved in [3]: A Busemann-Feller homothecy invariant  convex basis $B$  possesses the weak Besicovitch property if and only if $B$ is  a density basis.

Note that the result analogous to Theorem 1 for Busemann-Feller homothecy invariant convex basis formed by central-symmetric sets was proved by Guzm\'an and Men\'argues  [1, p.106].

\begin{remark}\label{r2}
The analogue of Theorem 1 is not valid for translation invariant convex bases. Namely, in [11] it was constructed an example of a Busemann-Feller translation invariant basis formed by two-dimensional intervals which
does not possess the weak Besicovitch property.

\end{remark}

\begin{remark}\label{r2}
It is unknown whether it is valid the characterization of homothecy invariant convex bases possessing the Besicovitch property analogous to Theorem 1.   Moreover, the question  is open even for homothecy invariant bases
formed by $n$-dimensional intervals. Some partial result in this direction is obtained in  [12].
\end{remark}

\medskip

\textbf{II}. A basis $B$ is called \emph{regular} if there is a number $c\geq 1$ such that for every set $R$ from the spread of the basis $B$ there exists a cubic interval $Q$ with the properties: $R\subset Q$ and $|Q|\leq c |R|$. Note that (see, e.g. [1, p.25]) each regular basis differentiates the class $L(\mathbb{R}^n)$.

Let $B$ be a homothecy invariant density basis. The \emph{halo function} $\varphi_B$ of the basis $B$ is defined as follows: $\varphi_B(t)=1$ if $0\leq t\leq 1$ and
$$
\varphi_B(t)=\sup_E\frac{|\{M_B(\chi_E)\geq 1/t\}|}{|E|}\;\;\;\text{if}\;\;\;t>1,
$$
where the supremum is taken over all bounded measurable sets $E$  with positive measure.

The \emph{halo conjecture} (see, [1, p. 178]) asserts that each  homothecy invariant density basis  $B$ differentiates the integral class $\varphi_B(L)(\mathbb{R}^n)$.

By Moriy\'on (see, [1,p.206]) it was characterised Busemann-Feller homothecy invariant density bases formed by open sets which differentiate the class $L(\mathbb{R}^n)$ as well as halo conjecture was justified for  bases with halo function satisfying the condition $c_1 t\leq \varphi_B(t)\leq c_2 t$ $(t\geq0)$. The result  is:

Let $B$ be a Busemann-Feller homothecy invariant density basis formed by open sets. Then the following statements are equivalent:
\begin{enumerate}
\item[$\bullet$] $B$ differentiates the class $L(\mathbb{R}^n)$;

\item[$\bullet$] $B$ is a regular basis;

\item[$\bullet$] The halo function of $B$ satisfies the condition: $c_1 t\leq \varphi_B(t)\leq c_2 t$ $(t\geq0)$, where $c_1$ and $c_2$ are positive constants.

\end{enumerate}

Taking into account Theorem B and Remark 7 from Theorems 1 and 3 we can easily conclude that the analog  of Moriy\'on's result  is true for every homothety invariant convex density basis.

\medskip

\textbf{III}. Hagelstein and Stokolos [13] proved that  every Busemann-Feller homothecy invariant  convex density basis differentiates the class $L^p(\mathbb{R}^n)\cap L(\mathbb{R}^n)$ for sufficiently large $p$. Taking into account Remark 7 and using Theorem 1  this result can be extended to  every   homothecy invariant  convex density basis.

\medskip
\medskip

\end{document}